\documentclass[12pt]{article}
\usepackage{hyperref}
\usepackage[latin1]{inputenc}
\usepackage{amsfonts}
\usepackage{amsmath,amssymb,amsthm}
\usepackage{color}
\usepackage[english]{babel}%
\usepackage[shortlabels]{enumitem}
\newtheorem{theorem}{Theorem}[section]

\newtheorem{lema}[theorem]{Lemma}
\newtheorem{de}[theorem]{Definition}
\newtheorem{remark}[theorem]{Remark}

\numberwithin{equation}{section}

\def \eps {\varepsilon}

\title{\vspace{-0.in}\parbox{\linewidth }{\footnotesize\noindent
} \\  \bf Fractional diffusion-wave equations with critical nonlinearities in  Lebesgue spaces}
\author{
Masterson Costa$^1$, Claudio Cuevas$^1$ \& Bruno de Andrade$^2$\footnote{Corresponding author. E-mail addresses: masterson.costa@dmat.ufpe.br (M. C.), claudio.henriquez@ufpe.br (C. C.), bruno@mat.ufs.br (B. A.).}\\
\small{$^1$ Department of Mathematics, Federal University of Pernambuco, Recife-PE, CEP 50.540-740, Brazil.}\\
\small{$^2$ Department of Mathematics, Federal University of Sergipe, Sergipe-SE, CEP 49.000-100, Brazil.}
}
\date{ }
\hypersetup{
colorlinks = true,
linkcolor = black,
anchorcolor = blue,
citecolor = blue,
filecolor = blue,
urlcolor = blue
}

\begin{document}

\maketitle

\author{ \ }

\begin{abstract}
	\noindent This paper focuses on the study of semilinear fractional diffusion-wave equations in the context of critical nonlinearities. Firstly, we address the issue of local well-posedness for the problem, examine spatial regularity, and the continuous dependence of the solutions on initial data. Secondly, we establish the existence of global mild solutions and investigate their asymptotic behavior.

\end{abstract}

\pagestyle{myheadings} \markboth{ } {\hfil$\hspace{1.5cm}$ {} \hfil}

\noindent{\textbf{Keywords:} Fractional diffusion-wave equation;   Spatial regularity of solutions; Critical non-linearities; Global solutions;  Asymptotic stability of solutions.}

\noindent{\textbf{MSC Classifications 2020:} 	35R11; 35B33; 35B65.}

\section{Introduction}\label{section1}


 It is well established that partial differential equations involving fractional derivatives
arise in a broad range of mathematical and applied contexts. Applications of fractional evolution equations are mentioned in the modeling of anomalous diffusion in  a wide variety of natural sciences, including physics, chemistry, biology, engineering, geology, and their interfacial disciplines, see \cite{Bouchaud,Fre, FMainardi, MK, MIMP, Uch} and the references therein.  Among the various models within this theory, the fractional diffusion-wave equations have attracted significant attention from researchers worldwide. This interest is largely due to their characteristic of interpolation between diffusion and wave equations. In recent years, substantial progress has been made in the analysis of these equations. We recommend the references \cite{B1, BKP, Dabb, AF,  deAS, Dj, HY, Kian, KKL, P} for further reading. 

Motivated by this increasing interest and the above works, in this paper, we study the fractional diffusion-wave equation
\begin{equation}\label{diffwave}
	\left\{ \begin{array}{ll}
		\partial_{t}^{\alpha} u = \Delta u +f(u),~\mbox{in}~[0,\infty)\times \Omega,\\
		u=0,~\mbox{on}~[0,\infty)\times \partial\Omega,~~\\
		u(0,x) = u_0(x),~ u'(0,x) = u_1(x),~\mbox{in}~\Omega,&
	\end{array} \right.
\end{equation}
where $\Omega\subset \mathbb{R}^{N}$ is a smooth bounded domain, $\alpha \in (1, 2)$ and $\partial_{t}^{\alpha}$ is Caputo's fractional derivative. Furthermore,  $f:\mathbb{R}\to\mathbb{R}$ is a continuous function that verifies $f(0)=0$ and 
\begin{eqnarray}\label{fc}
	|f(r)-f(s)|\leq c(|r|^{\rho-1}+|s|^{\rho-1})|r-s|,\quad \forall r,s\in\mathbb{R},
\end{eqnarray}
for some $c>0$ and $\rho>1$.  Roughly speaking, we are focused on determining the critical exponent for the equation \eqref{diffwave} when the initial data belongs to the space \(L^q(\Omega)\), where \(1 < q < \infty\), or to spaces with lower regularity.  The search for critical exponents is a significant theme in the study of various types of evolution equations. For example, in their pioneering paper \cite{fujita1963navier}, Fujita \& Kato search for the largest fractional power space in which a local-existence and uniqueness theorem holds for the  Navier-Stokes equation in a bounded smooth domain of $\mathbb{R}^3$. Fujita \cite{Fuj} considers the heat equation in $\mathbb{R}^N$ with a nonlinear term satisfying \eqref{fc}. Taking smooth non-negative initial conditions, he proves that $1+\frac{2}{N}$ is critical in the sense that global existence holds for small data if  $\rho>1+\frac{2}{N}$ and finite time blow-up behavior of solutions holds if $1< \rho <1+\frac{2}{N}$. Also, John \cite{John} considers the semilinear wave equation in $\mathbb{R}^3$ with nonlinearity satisfying \eqref{fc}, showing that $1+2^{1/2}$ plays a critical role in the analysis of global existence for compactly supported initial data. In \cite{deAV}, de Andrade \& Viana study the strongly damped plate equation 
	\begin{eqnarray*}
		u_{tt} = -\Delta^2 u +\mu\Delta u_t +|u|^{\rho-1}u,\ t>0,\ x\in\Omega,
	\end{eqnarray*}
where $\mu>0$, $\Delta^2$ is the biharmonic with hinged boundary conditions and $\Delta$ is the Dirichlet-Laplacian  in $L^2(\Omega)$, and prove that $1+\frac{4}{N}$ is the critical exponent if initial conditions are taken in $\big(H^{2}(\Omega)\cap H^{1}_0(\Omega)\big)\times L^2(\Omega)$. 

Concerning the fractional diffusion-wave equation, de Andrade \& Santos \cite{deAS} provided a thorough analysis of the evolution operators associated with \eqref{diffwave} in the fractional power scale related to the Laplace operator.  Considering initial data in $L^q(\Omega)$, they demonstrate that mild solutions of the nonlinear fractional diffusion-wave equation exhibit an immediate regularization effect, in the sense that these solutions possess greater regularity than the initial data. The results in \cite{deAS} are valid for any $1<\rho<1+\frac{2q}{N}$; however, when
\begin{equation}\label{critico}
	\rho=1+\frac{2q}{N},
\end{equation}
the technical tools used in their analysis are no longer sufficient to establish the well-posedness for \eqref{diffwave}, leaving this case as an open problem. The value \eqref{critico} plays a critical role to the study of existence and uniqueness of mild solutions for \eqref{diffwave}. Indeed, a combination of \cite[Theorem 4.1]{deAS} and our Theorem \ref{local} below ensures that for any $f$ satisfying \eqref{fc} and initial data $u_0,u_1\in L^{q}(\Omega)$, $1<q<\infty$ and $q\ge\frac{N(\rho-1)}{2}$, the problem \eqref{diffwave} is well-posed. However, if $\rho>1+\frac{2q}{N}$, or equivalently $q<\frac{N(\rho-1)}{2}$, and initial data $u_0,u_1\in L^{q}(\Omega)$, the problem \eqref{diffwave} has no mild solution for some Lipschitz functions $f$, see Remark \ref{rmkcritico}.

At this point, it is important to observe that \eqref{critico} is related with the critical case for the nonlinear  heat equation
\begin{equation}\label{heat}
	\left\{ \begin{array}{ll}
		u_{t}  = \Delta u +f(u),~\mbox{in}~[0,\infty)\times \Omega,\\
		u=0,~\mbox{on}~[0,\infty)\times \partial\Omega,~~\\
		u(0,x) = u_0(x),~\mbox{in}~\Omega,&
	\end{array} \right.
\end{equation}
where   $f$ satisfies \eqref{fc} and $u_0\in L^{q}(\Omega)$, $1<q<\infty$.  Without being exhaustive, we mention Weissler \cite{weissler79, weissler80} who firstly addressed the subject of existence and nonexistence of solutions to \eqref{heat}. Brezis \& Cazenave \cite{caze} have improved the analysis proving the uniqueness of solutions in a larger class of functions. Furthermore, they highlighted that the value $q=\frac{N(\rho-1)}{2}$ plays a critical role to \eqref{heat}. Arrieta \& Carvalho \cite{arrieta2000}  treated the problem by focusing on the non-linear term. Indeed, they introduced the notion of $\epsilon$-regular map and generalized the results of Brezis \& Cazenave to an abstract framework. In \cite{ACCM}, de Andrade et al. extended some of these results to the subdiffusive heat equation
\begin{equation}\label{sub}
	\left\{ \begin{array}{ll}
		\partial_{t}^{\gamma} u = \Delta u +f(u),~\mbox{in}~[0,\infty)\times \Omega,\\
		u=0,~\mbox{on}~[0,\infty)\times \partial\Omega,~~\\
		u(0,x) = u_0(x),~\mbox{in}~\Omega,&
	\end{array} \right.
\end{equation}
where $\partial_{t}^{\gamma}$ is Caputo's fractional derivative,  $\gamma \in (0,1)$, $f$ satisfies \eqref{fc} and $u_0\in L^{q}(\Omega)$, $1<q<\infty$. Particularly, the value $q=\frac{N(\rho-1)}{2}$, or equivalently $\rho=1+\frac{2q}{N}$, is, as to the case of the heat equation, the critical exponent to the issue of existence and uniqueness of mild solutions for  the problem. These facts and our Theorem \ref{local} below ensure that the critical exponent is independent of the time-fractional derivation order, that is,  superdiffusive \eqref{diffwave}, diffusive \eqref{heat} and subdiffusive \eqref{sub} problems have the same critical Lebesgue space, when the initial data are taken in $L^q(\Omega)$, $1<q<\infty$.  To the best of our knowledge, this fact is presented for the first time in this work.


To be more precise in our discussion, let $\{X^{\gamma}_{q}\}_{\gamma\in\mathbb{R}}$ be the scale of fractional powers spaces associated with the realization of the Laplace operator in $L^q(\Omega)$, with $1 < q <\infty$ and $q = \frac{N(\rho - 1)}{2}$. Firstly, we prove that a function $f$ satisfying \eqref{fc} induces an $\epsilon$-regular map relative to the pair $(X^{1}_{q}, X^{0}_{q})$, see Lemma \ref{reg} below. This property provide the essential framework for establishing the existence of mild solutions in the $L^{q}(\Omega)$ context,  see Section \ref{setting} for details of the abstract setting. 

Our main local result is the following.

\begin{theorem}\label{local}
	Let $1<\alpha<\frac{2\phi_q}{\pi}$, $1<\rho, q<\infty$, such that $q=\frac{N(\rho-1)}{2}$, and  $0<\varepsilon<\frac{N}{N+2q}$,  with $\alpha\rho\varepsilon<1$. Then, for all $v_0 \in L^q(\Omega)$, there exist $r=r(v_0)>0$ and $\tau_0=\tau_0(v_0)>0$ such that the problem \eqref{diffwavep} has an $\varepsilon$-regular mild solution $u(\cdot\ ;u_0,u_1)$ defined on $[0,\tau_0]$, for all $u_0,u_1 \in B_{r} (v_0) \subset L^{q}(\Omega)$. Moreover, the following statements hold:
	\begin{itemize}
		\item[{(A)}] For all $\theta \in [0, \rho\varepsilon)$, we have
		$$u(\cdot\ ;u_0,u_1) \in C\left((0,\tau_0];X^{1+\theta}_{q}\right)$$
		and, if $J \subset B_{r} (v_0) $ is compact, then
		$$\lim_{t\to0^+}t^{\alpha\varepsilon}\sup_{u_0, u_1 \in J}\|u(t;u_0, u_1)\|_{X^{1+\varepsilon}_{q}}=0.$$
		\item[{(B)}] For each $\theta \in [0, \rho\varepsilon)$, there exists a constant $\bar{c}=\bar{c}(\alpha, \rho, \varepsilon, \theta,v_0)$ such that
		$$	t^{\alpha\theta} \|u(t;u_0,u_1)-u(t;w_0,w_1)\|_{X^{1+\theta}_{q}} \le \Bar{c}\left(\|u_0-w_0\|_{L^{q}(\Omega)}+\|u_1-w_1\|_{L^{q}(\Omega)}\right), $$
		for all $t \in(0,\tau_0]$, and $u_0,u_1,w_0,w_1 \in B_{r}(v_0) \subset L^{q}(\Omega)$.
		\item[{(C)}] If $v$ is an $\varepsilon$-regular mild solution on some interval $[0,\tau_1]$ for the problem \eqref{diffwavep} satisfying
		$$\lim_{t\to0^+} t^{\alpha\varepsilon}\|v(t)\|_{X^{1+\varepsilon}_{q}}=0,$$
		then $v(t)=u(t;u_0,u_1)$ for all $t \in [0,\min\{ \tau_1,\tau_0\}]$
		\item[{(D)}] The $\varepsilon$-regular mild solution $u(\cdot\ ;u_0, u_1)$ can be continued on an interval $[0,\tau_{\mathrm{max}})$, where $\tau_{\mathrm{max}} \in (\tau_0,\infty]$. If $\tau_{\mathrm{max}}<\infty$, then 
		$$\limsup_{t \to \tau_{\textrm{max}}\,^-} \|u(t;u_0, u_1)\|_{_{X^{1+\varepsilon}_{q}}}=\infty.$$
	\end{itemize}
\end{theorem}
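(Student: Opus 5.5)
The plan is to follow the Arrieta--Carvalho scheme for $\varepsilon$-regular maps, adapted to the resolvent families of the fractional diffusion-wave equation studied in \cite{deAS}. Write \eqref{diffwavep} as the abstract problem $\partial_t^\alpha u = -Au + f(u)$ in the scale $\{X^\gamma_q\}$, where the data $u_0,u_1$ live in $L^q(\Omega)$, identified with the base space $X^1_q$ of the pair. A mild solution is then a fixed point of
\[
(\mathcal{T}u)(t)=E_\alpha(t)u_0+tE_{\alpha,2}(t)u_1+\int_0^t (t-s)^{\alpha-1}E_{\alpha,\alpha}(t-s)f(u(s))\,ds .
\]
I will use two ingredients. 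The first is the smoothing estimates from \cite{deAS}:
\[
\|E_\alpha(t)\|_{\mathcal L(X^1_q,X^{1+\theta}_q)}+\|E_{\alpha,2}(t)\|_{\mathcal L(X^1_q,X^{1+\theta}_q)}\le Mt^{-\alpha\theta},
\]
valid for $0\le\theta\le 1$ and $0<t\le 1$. Here the condition $1<\alpha<2\phi_q/\pi$ guarantees the sectoriality needed, and $t^{\alpha-1}\|E_{\alpha,\alpha}(t)\|_{\mathcal L(X^{\gamma}_q,X^{1+\theta}_q)}\le Mt^{\alpha-1-\alpha(1+\theta-\gamma)}$. The second is Lemma \ref{reg}: $f$ is $\varepsilon$-regular relative to $(X^1_q,X^0_q)$, with $f:X^{1+\varepsilon}_q\to X^{\rho\varepsilon}_q$ and
\[
\|f(x)-f(y)\|_{X^{\rho\varepsilon}_q}\le c\,\|x-y\|_{X^{1+\varepsilon}_q}\big(\|x\|^{\rho-1}_{X^{1+\varepsilon}_q}+\|y\|^{\rho-1}_{X^{1+\varepsilon}_q}+1\big).
\]
This is where $q=N(\rho-1)/2$ and $\varepsilon<N/(N+2q)$ enter.

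The key computation is the Beta-integral bound. If $\sup_{s\le t}s^{\alpha\varepsilon}\|u(s)\|_{X^{1+\varepsilon}_q}\le\mu$, then for $0\le\theta<\rho\varepsilon$,
\[
\Big\|\int_0^t (t-s)^{\alpha-1}E_{\alpha,\alpha}(t-s)f(u(s))\,ds\Big\|_{X^{1+\theta}_q}\le cM\mu^\rho\int_0^t (t-s)^{-1+\alpha(\rho\varepsilon-\theta)}s^{-\alpha\rho\varepsilon}\,ds = cM\mu^\rho B\big(\alpha(\rho\varepsilon-\theta),1-\alpha\rho\varepsilon\big)\,t^{-\alpha\theta}.
\]
This is finite precisely because $\theta<\rho\varepsilon$ and $\alpha\rho\varepsilon<1$; the lower-order "$+1$" term contributes a positive power of $t$. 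The analogous bound holds for differences. Note that the nonlinear term is $O(\mu^\rho)$ with $\rho>1$ and no gain in $t$. This is the critical feature: smallness must come from $\mu$, not from $\tau_0$.

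The main obstacle, in my view, is producing this small $\mu$ uniformly on a ball of data. The linear part is only bounded: $t^{\alpha\varepsilon}\|E_\alpha(t)v\|_{X^{1+\varepsilon}_q}\le M\|v\|_{L^q}$. However, for $v$ in the dense subspace $X^{1+\varepsilon}_q$ the left side is $O(t^{\alpha\varepsilon})$. By density, therefore, $t^{\alpha\varepsilon}\|E_\alpha(t)v_0\|_{X^{1+\varepsilon}_q}\to0$ as $t\to0^+$, and similarly for $tE_{\alpha,2}(t)v_0$. So I first fix $\mu$ small so that $2cMB\mu^{\rho-1}(1+\ldots)\le 1/2$. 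Then I choose $r$ with $2Mr\le\mu/4$ and $\tau_0$ with $\sup_{t\le\tau_0}t^{\alpha\varepsilon}\big(\|E_\alpha(t)v_0\|+\|tE_{\alpha,2}(t)v_0\|\big)_{X^{1+\varepsilon}_q}\le\mu/4$. Next I show $\mathcal T$ is a contraction on
\[
K=\{u\in C((0,\tau_0];X^{1+\varepsilon}_q):\ \sup t^{\alpha\varepsilon}\|u(t)\|_{X^{1+\varepsilon}_q}\le\mu,\ t^{\alpha\varepsilon}\|u(t)\|_{X^{1+\varepsilon}_q}\to0\}
\]
with the weighted sup metric. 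Continuity at $t=0$ in $L^q$ and continuity on $(0,\tau_0]$ are standard consequences of the strong continuity of the resolvent families.

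The remaining items follow by reusing the Beta estimate. For (B), I take the difference of two fixed points, estimate it first in $\theta=\varepsilon$, absorb the nonlinear term using the factor $\le 1/2$, and then bootstrap to general $\theta<\rho\varepsilon$. For (A), regularity in $X^{1+\theta}_q$ follows from the same bound. The uniform limit over a compact $J$ holds because the convergence $t^{\alpha\varepsilon}\|E_\alpha(t)v\|\to0$ is uniform on compacts: it is equicontinuous in $v$ with constant $M$, so a finite net argument applies. The fixed-point inequality then gives $\sup_{s\le t}s^{\alpha\varepsilon}\|u\|\le 2\sup_{s\le t}s^{\alpha\varepsilon}\|\text{linear part}\|\to0$. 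For (C), the hypothesis lets me shrink the interval so that $v$ lies in $K$; uniqueness of the fixed point then gives $v=u$ there, and I extend by a Gronwall/Volterra-type singular inequality. For (D), I continue the solution by restarting from $u(\tau)\in X^{1+\varepsilon}_q$, where the existence time depends only on a bound for $\|u(\tau)\|_{X^{1+\varepsilon}_q}$ (noncritical there). The memory term requires treating the history as a known forcing that is bounded under a bound on the solution. If $\limsup\|u\|_{X^{1+\varepsilon}_q}<\infty$ at $\tau_{\max}$, a Cauchy argument yields a limit and hence a further extension, which contradicts maximality.
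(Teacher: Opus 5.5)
Your proposal is correct and follows essentially the same route as the paper. Both arguments run a contraction on the weighted set $\{\sup_t t^{\alpha\varepsilon}\|u(t)\|_{X^{1+\varepsilon}_q}\le\mu\}$, with $\mu$ fixed by the Beta-function condition, $r$ of order $\mu/M$, and $\tau_0$ chosen from $t^{\alpha\varepsilon}\|E_\alpha(t\mathcal{A}_q)v_0\|_{X^{1+\varepsilon}_q}\to0$. Both then use shrinking-interval uniqueness plus a singular Gronwall inequality for (C), and continuation with frozen history plus a Cauchy argument for (D).

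Your deviations are harmless. You build the condition $t^{\alpha\varepsilon}\|u(t)\|_{X^{1+\varepsilon}_q}\to0$ into $K$, and you prove (B) first at $\theta=\varepsilon$ and then bootstrap to $\theta<\rho\varepsilon$. The second choice is actually cleaner than the paper's direct estimate, which puts $X^{1+\theta}_q$ norms into a Lipschitz bound that Lemma \ref{reg} only provides in $X^{1+\varepsilon}_q$.
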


We apply the previous theorem to an illustrative example. Consider the Cauchy problem
\begin{equation}\label{diffwaveexample}
	\left\{ \begin{array}{ll}
		\partial_{t}^{\alpha} u = \Delta u +u^{3},~\mbox{in}~[0,\infty)\times \Omega,\\
		u=0,~\mbox{on}~[0,\infty)\times \partial\Omega,~~\\
		u(0,x) = u_0(x),~ u'(0,x) = u_1(x),~\mbox{in}~\Omega,&
	\end{array} \right.
\end{equation}
where $\Omega$ is a bounded open subset of $\mathbb{R}^{3}$ with sufficiently smooth boundary $\partial \Omega$, and $\partial_{t}^{\alpha}$ is Caputo's fractional derivative, for $1<\alpha<\frac{2\phi_{3}}{\pi}$. If $u_0, u_1\in L^3(\Omega)$, then there exists a $L^{3}(\Omega)$-valued continuous function $u$ given by 
$$u(t,x)=E_{\alpha}(t\mathcal{A}_3)u_0(x) + S_{\alpha}(t\mathcal{A}_3)u_1(x) + \int_0^t R_{\alpha}((t-s)\mathcal{A}_3)u(s,x)^{3}\,ds,\quad t\in [0,\tau]\ \mbox{and}\ x\in\Omega,$$
which is the unique $\eps$-regular mild solution to \eqref{diffwaveexample} satisfying
$$\lim_{t\to0^+} t^{\alpha\varepsilon}\|u(t)\|_{X^{1+\varepsilon}_{3}}=0,$$
for any $\eps \in (0,\frac{1}{3\alpha})$. Furthermore, if $0\le \theta < 3\varepsilon$, then 
$u \in C\left((0,\tau_0];X^{1+\theta}_{3}\right).$  Finally, $u$ can be continued to a maximal time of existence $\tau_{max}>0$ such that
$$\limsup_{t\rightarrow \tau_{max}^{-}}\int_{\Omega}|u(t,x)|^{3}dx=+\infty,$$
if $\tau_{max}<+\infty$.

	In Theorem \ref{local} we consider the initial data $u_0$ and $u_1$ belonging to $X^{1}_{q}=L^{q}(\Omega)$. However, the same conclusions are true if $u_0\in L^{q}(\Omega)$, and the second initial condition has even less regularity, see Remark \ref{rmk}.

Also, we are interested on studying the issue of global solutions to \eqref{diffwave}, and  the asymptotic behavior of these solutions. We obtain the following global small data result.

\begin{theorem}\label{globalexistence}
	Let $1<\alpha<\frac{2\phi_q}{\pi}$, $1<\rho, q<\infty$, such that $q=\frac{N(\rho-1)}{2}$, and  $0<\varepsilon<\frac{N}{N+2q}$,  with $\alpha\rho\varepsilon<1$.  If $u_0\in L^{q}(\Omega)$ and $u_1\in X^{1-\frac{1}{\alpha}}_{q}$ have sufficiently small norms, then there exists $\mu>0$ and a globally defined $\eps$-regular mild solution $u=u(\cdot\ ;u_0,u_1)$ to the problem \eqref{diffwavep} that verifies
	$$t^{\alpha\eps}\|u(t)\|_{X^{1+\eps}_{q}}\le \mu,\quad t>0.$$
	Moreover, if $u_0, w_0\in L^{q}(\Omega)$ and $ u_1, w_1\in X^{1-\frac{1}{\alpha}}_{q}$ have sufficiently small norms, then there exists a constant $c>0$ such that
	$$t^{\alpha\eps}\|u(t,u_0,u_1)-u(t,w_0,w_1)\|_{X^{1+\eps}_{q}}\le c\left(\|u_0 - w_0\|_{L^{q}(\Omega)}+\|u_1 - w_1\|_{X^{1-\frac{1}{\alpha}}_{q}}\right),\quad t>0.$$
\end{theorem}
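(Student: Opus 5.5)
The plan is a fixed-point argument for the variation of constants formula
$$u(t)=E_{\alpha}(t\mathcal{A}_q)u_0+S_{\alpha}(t\mathcal{A}_q)u_1+\int_0^t R_{\alpha}((t-s)\mathcal{A}_q)f(u(s))\,ds .$$
We work in the complete metric space $K_\mu$ of functions $u\in C((0,\infty);X^{1+\eps}_q)$ with
$$\sup_{t>0}t^{\alpha\eps}\|u(t)\|_{X^{1+\eps}_q}\le\mu ,$$
endowed with the distance $d(u,v)=\sup_{t>0}t^{\alpha\eps}\|u(t)-v(t)\|_{X^{1+\eps}_q}$. Because the time interval is unbounded, every estimate must be \emph{global in time}, not merely valid on $[0,\tau]$.

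\textbf{Linear estimates.} From the analysis of the evolution operators (the estimates in Section \ref{setting}, from \cite{deAS}), the solution operators obey
$$\|E_\alpha(t\mathcal{A}_q)\|_{\mathcal{L}(X^1_q,X^{1+\eps}_q)}\le Mt^{-\alpha\eps},\qquad \|S_\alpha(t\mathcal{A}_q)\|_{\mathcal{L}(X^{1-1/\alpha}_q,X^{1+\eps}_q)}\le Mt^{1-\alpha(\eps+1/\alpha)}=Mt^{-\alpha\eps},$$
$$\|R_\alpha(t\mathcal{A}_q)\|_{\mathcal{L}(X^{1+\eps-\rho\eps}_q\ \text{or}\ X^{\rho\eps}_q,\,X^{1+\eps}_q)}\le Mt^{\alpha-1-\alpha(1+\eps-\rho\eps)}=Mt^{\alpha\rho\eps-\alpha\eps-1}.$$
These hold for all $t>0$, which is where we use $\alpha<2\phi_q/\pi$: the sector condition makes the Mittag-Leffler type operators decay like powers of $t$ rather than merely being bounded on finite intervals. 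The choice $u_1\in X^{1-1/\alpha}_q$ is exactly what makes the $S_\alpha$ term scale like $t^{-\alpha\eps}$, matching the $E_\alpha$ term; this explains the hypothesis on $u_1$. I would check that these constants $M$ are uniform on $(0,\infty)$, using the resolvent representation on a Hankel contour.

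\textbf{Nonlinear term.} By Lemma \ref{reg}, $f$ is $\eps$-regular relative to $(X^1_q,X^0_q)$:
$$\|f(x)-f(y)\|_{X^{\rho\eps}_q}\le c\,\|x-y\|_{X^{1+\eps}_q}\left(\|x\|^{\rho-1}_{X^{1+\eps}_q}+\|y\|^{\rho-1}_{X^{1+\eps}_q}+1\right).$$
On a bounded domain I expect the critical exponent to remove the "$+1$". If it does not, I would restrict to the homogeneous version, which holds when $f(0)=0$ and \eqref{fc} is homogeneous.

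\textbf{Contraction on $K_\mu$.} For $u\in K_\mu$, write $T$ for the map defined by the right-hand side. Then
$$t^{\alpha\eps}\|Tu(t)\|_{X^{1+\eps}_q}\le M\|u_0\|_{L^q}+M\|u_1\|_{X^{1-1/\alpha}_q}+Mc\,\mu^\rho\, t^{\alpha\eps}\int_0^t(t-s)^{\alpha\rho\eps-\alpha\eps-1}s^{-\alpha\rho\eps}\,ds .$$
The integral equals $t^{-\alpha\eps}B(\alpha\rho\eps-\alpha\eps,\,1-\alpha\rho\eps)$. This Beta function is finite because $\rho>1$ and $\alpha\rho\eps<1$, so the time weights cancel exactly, which is the hallmark of criticality. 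The bound therefore becomes
$$M(\|u_0\|+\|u_1\|)+Mc B\mu^\rho .$$
The same computation gives
$$d(Tu,Tv)\le 2McB\mu^{\rho-1}\,d(u,v).$$
Choose $\mu$ with $2McB\mu^{\rho-1}\le 1/2$, and then take the data small enough that $M(\|u_0\|+\|u_1\|)\le\mu/2$. Then $T$ is a contraction of $K_\mu$, and its fixed point is the global $\eps$-regular mild solution. Continuity in $t$ on $(0,\infty)$ follows from the strong continuity of the operator families, exactly as in Theorem \ref{local}.

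\textbf{Lipschitz dependence.} For two solutions $u$ and $w$, subtract their equations and apply the same estimates:
$$d(u,w)\le M\bigl(\|u_0-w_0\|_{L^q}+\|u_1-w_1\|_{X^{1-1/\alpha}_q}\bigr)+\tfrac12 d(u,w).$$
This gives the stated bound with $c=2M$.

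\textbf{Main obstacle.} The key difficulty is the global-in-time version of the linear decay estimates together with the homogeneous form of the $\eps$-regularity bound. Both are needed so that no constant depends on $\tau$.
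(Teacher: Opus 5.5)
Your proposal is correct and follows the paper's proof essentially step for step. It uses the same weighted space $\{u:\sup_{t>0}t^{\alpha\varepsilon}\|u(t)\|_{X^{1+\varepsilon}_q}\le\mu\}$ and the same Beta-function cancellation of the time weights, and it obtains a $\tfrac12$-contraction by first fixing $\mu$ and then shrinking the data (the paper takes $Mc\mu^{\rho-1}B=\tfrac14$ with data bound $\mu/4$ where you use $\mu/2$, and your explicit constant $c=2M$ supplies the Lipschitz-dependence step the paper only calls ``likewise''); the one thing to tidy is that Lemma \ref{reg} already gives the homogeneous estimate without the ``$+1$'', directly from \eqref{fc} and $f(0)=0$, so that hedge is unnecessary.
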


Our next result addresses the subject of stability for global solutions. As far as we know, this work is the first
to deal with problem \eqref{diffwave} in this context.

\begin{theorem}\label{asymptotics}
	Under the conditions of Theorem \ref{globalexistence}, let $u=u(\cdot\ ;u_0,u_1)$ and $v=v(\cdot\ ;v_0,v_1)$ be $\eps$-regular mild solutions to problem \eqref{diffwavep}. Then 
	\begin{equation}\label{est1}
		\lim_{t\to+\infty}t^{\alpha\eps}\left\| E_\alpha(t\mathcal{A}_q)(u_0 - v_0) + S_\alpha(t\mathcal{A}_q)(u_1 - v_1)  \right\|_{X^{1+\eps}_{q}}=0
	\end{equation}
	if, and only if, 
	\begin{equation}\label{est2}
		\lim_{t\to+\infty}t^{\alpha\eps}\left\| u(t)-v(t)  \right\|_{X^{1+\eps}_{q}}=0.
	\end{equation}
\end{theorem}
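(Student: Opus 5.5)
We argue from the variation of constants formula. Write $w(t)=u(t)-v(t)$ and $L(t)=E_\alpha(t\mathcal{A}_q)(u_0-v_0)+S_\alpha(t\mathcal{A}_q)(u_1-v_1)$. Since both $u$ and $v$ are $\eps$-regular mild solutions,
$$w(t)=L(t)+\int_0^t R_\alpha((t-s)\mathcal{A}_q)\big(f(u(s))-f(v(s))\big)\,ds .$$
Both implications then reduce to showing that
$$I(t):=t^{\alpha\eps}\Big\|\int_0^t R_\alpha((t-s)\mathcal{A}_q)\big(f(u(s))-f(v(s))\big)\,ds\Big\|_{X^{1+\eps}_q}$$
is controlled by $\sup t^{\alpha\eps}\|w\|$, with a small constant and a decaying remainder.

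\textbf{Step 1: the basic estimate.} By the $\eps$-regularity of $f$ relative to $(X^1_q,X^0_q)$ (Lemma \ref{reg}), $f$ maps $X^{1+\eps}_q$ into $X^{\rho\eps}_q$ with
$$\|f(u)-f(v)\|_{X^{\rho\eps}_q}\le c\,\|u-v\|_{X^{1+\eps}_q}\big(\|u\|^{\rho-1}_{X^{1+\eps}_q}+\|v\|^{\rho-1}_{X^{1+\eps}_q}\big).$$
We combine this with the smoothing estimate for $R_\alpha$ from Section \ref{setting}, namely $\|R_\alpha(t\mathcal{A}_q)\|_{\mathcal L(X^{\rho\eps}_q,X^{1+\eps}_q)}\le C t^{\alpha(\rho\eps-\eps)-1}$ (exponent $\alpha\beta-1$ with $\beta=1-(1+\eps-\rho\eps)$), and with the global bounds $s^{\alpha\eps}\|u(s)\|,\,s^{\alpha\eps}\|v(s)\|\le\mu$ from Theorem \ref{globalexistence}. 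This gives
$$I(t)\le C\mu^{\rho-1}t^{\alpha\eps}\int_0^t (t-s)^{\alpha(\rho-1)\eps-1}s^{-\alpha\rho\eps}\,\big(s^{\alpha\eps}\|w(s)\|_{X^{1+\eps}_q}\big)\,ds .$$
The kernel integrates to $t^{-\alpha\eps}B(\alpha(\rho-1)\eps,\,1-\alpha\rho\eps)$. This is finite because $\alpha\rho\eps<1$ and $\rho>1$, which are exactly the hypotheses of the theorem.

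\textbf{Step 2: splitting the integral.} Set $\phi(t)=t^{\alpha\eps}\|w(t)\|_{X^{1+\eps}_q}$; it is bounded by $2\mu$. Fix $\delta\in(0,1)$ and split $\int_0^t$ into $\int_0^{\delta t}$ and $\int_{\delta t}^t$. On $[0,\delta t]$ we substitute $s=t\sigma$; the contribution is
$$\le C\mu^{\rho-1}\cdot 2\mu\int_0^\delta(1-\sigma)^{\alpha(\rho-1)\eps-1}\sigma^{-\alpha\rho\eps}\,d\sigma ,$$
which is small for small $\delta$. On $[\delta t,t]$ the contribution is $\le C\mu^{\rho-1}B\,\sup_{s\ge\delta t}\phi(s)$. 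Hence
$$\limsup_{t\to\infty} I(t)\le \eta(\delta)+K\mu^{\rho-1}\limsup_{t\to\infty}\phi(t),$$
where $\eta(\delta)\to0$ as $\delta\to0$ and $K=CB$.

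\textbf{Step 3: concluding both directions.} Since the data are small, $\mu$ is small, and we may assume $\kappa:=K\mu^{\rho-1}<1$; this is the same smallness used to obtain the contraction in Theorem \ref{globalexistence}, and we shrink it further if needed. Let $\ell=\limsup_{t\to\infty} t^{\alpha\eps}\|L(t)\|$. The decomposition gives $|\phi(t)-t^{\alpha\eps}\|L(t)\||\le I(t)$.

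If \eqref{est1} holds, then $\ell=0$, so $\limsup\phi\le\eta(\delta)+\kappa\limsup\phi$. Since $\limsup\phi<\infty$, this yields $\limsup\phi\le\eta(\delta)/(1-\kappa)$, and letting $\delta\to0$ gives \eqref{est2}.

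Conversely, if \eqref{est2} holds, then $\limsup\phi=0$. Step 2 gives $\limsup I\le\eta(\delta)$ for every $\delta$, hence $I(t)\to0$, and therefore $\ell=0$.

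\textbf{Main obstacle.} The delicate point is Step 1. It needs a precise statement of the $\eps$-regularity of $f$ and the exact exponent in the $R_\alpha$ smoothing bound between $X^{\rho\eps}_q$ and $X^{1+\eps}_q$. We also need the global smallness of $\mu$ to be uniform for both solutions, so that $\kappa<1$; this is guaranteed only because both solutions come from Theorem \ref{globalexistence} with small data.
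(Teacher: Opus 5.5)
Your proof is correct and follows the paper's argument: the same variation-of-constants decomposition, Lemma \ref{reg} combined with the $R_\alpha$ smoothing bound to reduce everything to the Beta integral, and absorption using $2Mc\mu^{\rho-1}\mathbf{B}=\tfrac12<1$ from the construction in Theorem \ref{globalexistence} (so no further shrinking of $\mu$ is needed). The only difference is in passing the $\limsup$ through the integral: the paper rescales $s\mapsto st$ and applies the reverse Fatou lemma with the integrable majorant $2\mu(1-s)^{-1-\alpha(\eps-\rho\eps)}s^{-\alpha\rho\eps}$, while you split by hand at $\delta t$, which is an equivalent elementary version of the same step.
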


As application of the above results, we deal with the asymptotic analysis of a concrete situation. Consider the Cauchy problem
\begin{equation}\label{diffwave3/2}
	\left\{ \begin{array}{ll}
		\partial_{t}^{\frac{3}{2}} u = \Delta u +u\sqrt[3]{u^4},~\mbox{in}~[0,\infty)\times \Omega,\\
		u=0,~\mbox{on}~[0,\infty)\times \partial\Omega,~~\\
		u(0,x) = u_0(x),~ u'(0,x) = u_1(x),~\mbox{in}~\Omega,&
	\end{array} \right.
\end{equation}
where $\Omega$ is bounded open subset of $\mathbb{R}^{3}$ with sufficiently smooth boundary $\partial \Omega$, $\partial_{t}^{\frac{3}{2}}$ is  Caputo's fractional derivative of order $\frac{3}{2}$. Our above results ensure that if $u_{0}\in L^2(\Omega)$ and $u_1\in X^{\frac{1}{3}}_{2}$ have sufficiently small norm, then for every {$\eps\in(0,\frac{2}{7})$}, problem \eqref{diffwave3/2} has a globally defined $\eps$-regular mild solution given by
$$u(t,x)=E_{\frac{3}{2}}(t\mathcal{A}_2)u_0(x) + S_{\frac{3}{2}}(t\mathcal{A}_2)u_1(x) + \int_0^t R_{\frac{3}{2}}((t-s)\mathcal{A}_2)u(s,x)|u(s,x)|^{\frac{4}{3}}\,ds,\quad t>0\ \mbox{and}\ x\in\Omega.$$
Furthermore, given $\phi,\psi\in C^{\infty}_{0}$, let $v$ be a globally defined $\eps$-regular mild solution of \eqref{diffwave3/2} with initial condition $v_0=u_0+\phi$ and $v_1=u_1+\psi$.  Then, for every {$\eps\in(0,\frac{2}{7})$}, it follows that 
$$	\lim_{t\to+\infty}t^{\frac{3\eps}{2}}\left\| u(t)-v(t)  \right\|_{X^{1+\eps}_{q}}=0.$$
In fact, taking $\beta<\frac{1}{3}$, it is sufficient to note that
\begin{eqnarray*}
	\left\| E_{\frac{3}{2}}(t\mathcal{A}_2)(u_0-v_0) + S_{\frac{3}{2}}(t\mathcal{A}_2)(u_1-v_1)  \right\|_{X^{1+\eps}_{2}}& = &\left\| E_{\frac{3}{2}}(t\mathcal{A}_2)\phi + S_{\frac{3}{2}}(t\mathcal{A}_2)\psi  \right\|_{X^{1+\eps}_{2}}\\
	& \le & \left\| E_{\frac{3}{2}}(t\mathcal{A}_2)\phi\right\|_{X^{1+\eps}_{2}} + \left\|S_{\frac{3}{2}}(t\mathcal{A}_2)\psi  \right\|_{X^{1+\eps}_{2}}\\
	& \le & Mt^{1-\frac{3}{2}(1+\eps-\beta)} \left\| \phi\right\|_{X^{\beta}_{2}} + Mt^{-\frac{3}{2}(1+\eps-\beta)}\left\|\psi  \right\|_{X^{\beta}_{2}}\\
\end{eqnarray*}
since $-\frac{3}{2}(1-\beta)<1-\frac{3}{2}(1-\beta)<0$,  we have
$$	\lim_{t\to+\infty}t^{\frac{3\eps}{2}}\left\| E_{\frac{3}{2}}(t\mathcal{A}_2)(u_0-v_0) + S_{\frac{3}{2}}(t\mathcal{A}_2)(u_1-v_1)  \right\|_{X^{1+\eps}_{2}}=0,$$
and the result follows from Theorem \ref{asymptotics}. Consequently, using the above results we present a basin of attraction around each $\eps$-regular mild solution of \eqref{diffwave3/2}.

The organization of this paper is as follows.  In the next section, we rewrite the semilinear fractional diffusion-wave equation \eqref{diffwave} as an abstract evolution problem in the scale $\{X^{\gamma}_{q}\}_{\gamma\in\mathbb{R}}$, remembering some important known results. In Section \ref{proofmain}, we present the proofs of our main results.

\section{Abstract setting of the problem and known results } \label{setting}

In order to fit the problem in an abstract functional setting let us recall the usual functional spaces involved.  Remember that
the operator $L = - \Delta$ with Dirichlet boundary conditions can be seen as a sectorial operator in $E^{0}_{q} = L^q(\Omega)$ with domain $E^{1}_{q} = W^{2,q}(\Omega) \cap W^{1,q}_{0}(\Omega)$.  It is well known that the scale of fractional powers spaces $\{E^{\gamma}_{q} \}_{\gamma \in \mathbb{R}}$ associated with $L$
checks
\begin{equation*}
	\left\{
	\begin{matrix}
		E^{\gamma}_{q} \hookrightarrow H^{2\gamma}_{q}(\Omega),&\gamma\ge0,&1<q<\infty,~~~~~~~~~ ~~~~~\\
		E^{-\gamma}_{q} = (E^{\gamma}_{q'})',&\gamma\ge0,&1<q<\infty, ~~ q'=\frac{q}{q-1},
	\end{matrix}
	\right.
\end{equation*}
see \cite{Amann}. Therefore, using duality arguments, we obtain
\begin{equation}\label{potfrac}
	\left\{
	\begin{matrix}
		E^{\gamma}_{q} \hookrightarrow L^r(\Omega),&r\le\frac{Nq}{N-2\gamma q},&0\le \gamma<\frac{N}{2q},~~~~~\\
		E^{0}_{q} = L^q(\Omega),~& \\
		E^{\gamma}_{q} \hookleftarrow L^s(\Omega),&s\ge\frac{Nq}{N-2\gamma q},&-\frac{N}{2q'}<\gamma\le 0.~~~  
	\end{matrix}
	\right.
\end{equation}
Moreover, the realization of $L$ in $E^{\gamma}_{q}$, denoted by $L_{\gamma}$, is an isometry from $E^{\gamma+1}_{q}$ into $E^{\gamma}_{q}$,  
\begin{equation*}
	L_\gamma : D(L_\gamma) = E^{\gamma+1}_{q} \subset E^{\gamma}_{q}  \to E^{\gamma}_{q}, 
\end{equation*}
a sector operator, and $D(L^{\alpha}_{\gamma}) = E^{\gamma+\alpha}_{q}$.

Our goal is to consider \eqref{diffwave} with initial data belonging to $L^q(\Omega)$, with $1 < q <\infty$ and $q = \frac{N(\rho - 1)}{2}$. Hence,  set $X^{\gamma}_{q}:= E^{\gamma-1}_{q}$, $\gamma \in \mathbb{R}$, and let $\mathcal{A}_q : X^{1}_{q} \subset X^{0}_{q} \to X^{0}_{q}$ be the operator $L_{-1}$. From \eqref{potfrac},
it follows that the scale of fractional powers spaces associated with $\mathcal{A}_q$ satisfies
\begin{eqnarray}\label{potfracmodLQ}
	\left\{\begin{array}{lrc}
		X_{q}^{\beta}\hookrightarrow L^r(\Omega), \ \ r\leq \frac{Nq}{N+2q-2\beta q}, \ \ 1\leq\beta<1+\frac{N}{2q},\\
		X_{q}^{1}=L^q(\Omega),\\
		X_{q}^{\beta}\hookleftarrow L^s(\Omega), \ \ s\geq \frac{Nq}{N+2q-2\beta q}, \ \ 1-\frac{N}{2q'}<\beta\leq 1.\\
	\end{array}\right.
\end{eqnarray}
 Using this framework, problem (\ref{diffwave}) can be rewritten in $X^{0}_{q}$ as
\begin{equation}\label{diffwavep}
	\left\{ \begin{array}{ll}
		D_{t}^{\alpha} u = \mathcal{A}_q u +f(u),\\
		u(0) = u_0,~ u'(0) = u_1,&
	\end{array} \right.
\end{equation}
where $u_0,u_1\in X^{1}_{q}=L^{q}(\Omega)$. 

Before proceeding, we need to be more precise about the meaning of solution to  \eqref{diffwavep}, or equivalently to \eqref{diffwave}, we are looking for. 
Let $(E_\alpha(t\mathcal{A}_q))_{t\ge 0}$, $(S_\alpha(t\mathcal{A}_q))_{t\ge 0}$ and $(R_\alpha(t\mathcal{A}_q))_{t\ge 0}$  be the  Mittag-Leffler operators generated by $\mathcal{A}_q$. 

\begin{de}
A continuous function $u:[0,\tau] \to L^q(\Omega)$ is an $\varepsilon$-regular mild solution to \eqref{diffwavep} if $u \in {C}((0,\tau]; X^{1+\varepsilon}_{q})$ and
	\begin{equation*}
		u(t)=E_\alpha(t\mathcal{A}_q)u_0 + S_\alpha(t\mathcal{A}_q)u_1 + \int_0^t R_{\alpha}((t-s)\mathcal{A}_q)f(u(s))\,ds,
	\end{equation*}
for all $t\in [0,\tau]$.
\end{de}


Remembering that $\mathcal{A}_q : X^{1}_{q} \subset X^{0}_{q} \to X^{0}_{q}$ is a sectorial operator, we can set $\phi_q\in (\frac{\pi}{2}, 
\pi)$  such that $\mathbb{S}_{\phi_q}=\{z\in\mathbb{C}: |\arg(z)|\leq\phi_q, z\neq 0\}\subset\rho(\mathcal{A}_q)$, see \cite[Section 6.5]{CLR}. 
From now on, we consider $\alpha\in (1,\frac{2\phi_q}{\pi})$. Note that if $\eta_0\in (\frac{\pi}{2}, \frac{\phi_q}{\alpha})$ and $\lambda\in\mathbb{S}_{\eta_0}$ then 
$$\lambda^{\alpha}\in\mathbb{S}_{\phi_q}\subset\rho(A_q)\quad \mbox{and}\quad \|(\lambda^\alpha-\mathcal{A}_q)^{-1}\|_{\mathcal{L}(X_{q}^{1})}\leq C|\lambda|^{-\alpha}.$$
In \cite{deAS},  the authors prove that the families of bounded linear operators $(E_\alpha(t\mathcal{A}_q))_{t\ge 0}$, $(S_\alpha(t\mathcal{A}_q))_{t\ge 0}$ and $(R_\alpha(t\mathcal{A}_q))_{t\ge 0}$ are strongly continuous and admit analytic extensions to suitable sectors of the complex plane. The following smoothing effect property  is central to ensure existence and uniqueness of $\varepsilon$-regular mild solution to \eqref{diffwavep}.

\begin{lema}[\cite {deAS}]
	Consider $1<\alpha<\frac{2\phi_q}{\pi}$ and $0\le\theta<\beta\le 1$. There exists $M>0$ such that
	\begin{equation*}
		\|E_{\alpha}(t\mathcal{A}_q)x\|_{X^{1+\theta}_{q}}\leq Mt^{-\alpha(1+\theta-\beta)}\|x\|_{X^{\beta}_{q}},\quad \|S_{\alpha}(t\mathcal{A}_q)x\|_{X^{1+\theta}_{q}}\leq Mt^{1-\alpha(1+\theta-\beta)}\|x\|_{X^{\beta}_{q}}
	\end{equation*}
	and
	\begin{equation*}
		\|R_{\alpha}(t\mathcal{A}_q)x\|_{X^{1+\theta}_{q}}\leq Mt^{-1-\alpha(\theta-\beta)}\|x\|_{X^{\beta}_{q}},
	\end{equation*}
	for all $x\in X^{\beta}_{q}$ and $t>0$. 
\end{lema}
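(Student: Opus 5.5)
The plan is to prove the three estimates from the Laplace-transform (Dunford integral) representation of the Mittag-Leffler families, combined with a resolvent estimate for fractional powers of $\mathcal{A}_q$. Since $\mathcal{A}_q$ is realized on the whole scale, the fractional power $(-\mathcal{A}_q)^{\gamma}$ maps $X^{\beta}_{q}$ isometrically onto $X^{\beta-\gamma}_{q}$. It therefore suffices to work in the base space $X^{\beta}_{q}$ and bound $(-\mathcal{A}_q)^{1+\theta-\beta}$ applied to each operator. Set $\gamma:=1+\theta-\beta$. The hypothesis $0\le\theta<\beta\le1$ gives $\gamma\in[0,1)$.

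\textbf{Step 1 (representation).} I take as known, from the construction of these families in \cite{deAS}, the formulas
\begin{equation*}
E_\alpha(t\mathcal{A}_q)=\frac{1}{2\pi i}\int_{Ha} e^{\lambda t}\lambda^{\alpha-1}(\lambda^\alpha-\mathcal{A}_q)^{-1}\,d\lambda ,\qquad
S_\alpha(t\mathcal{A}_q)=\frac{1}{2\pi i}\int_{Ha} e^{\lambda t}\lambda^{\alpha-2}(\lambda^\alpha-\mathcal{A}_q)^{-1}\,d\lambda ,
\end{equation*}
together with
\begin{equation*}
R_\alpha(t\mathcal{A}_q)=\frac{1}{2\pi i}\int_{Ha} e^{\lambda t}(\lambda^\alpha-\mathcal{A}_q)^{-1}\,d\lambda .
\end{equation*}
Here $Ha$ is a Hankel path inside $\mathbb{S}_{\eta_0}$ with $\eta_0\in(\frac{\pi}{2},\frac{\phi_q}{\alpha})$. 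It consists of the two rays $\arg\lambda=\pm\eta_0$, $|\lambda|\ge r$, and the arc $|\lambda|=r$. On this path $\lambda^\alpha\in\mathbb{S}_{\phi_q}$, and $\|(\lambda^\alpha-\mathcal{A}_q)^{-1}\|\le C|\lambda|^{-\alpha}$.

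\textbf{Step 2 (fractional resolvent bound).} The key auxiliary estimate is
\begin{equation*}
\|(-\mathcal{A}_q)^{\gamma}(\mu-\mathcal{A}_q)^{-1}\|_{\mathcal{L}(X^\beta_q)}\le C|\mu|^{\gamma-1},\qquad \mu\in\mathbb{S}_{\phi_q},\ \gamma\in[0,1].
\end{equation*}
At $\gamma=0$ this is sectoriality. At $\gamma=1$ it follows from the identity $\mathcal{A}_q(\mu-\mathcal{A}_q)^{-1}=\mu(\mu-\mathcal{A}_q)^{-1}-I$. Intermediate values of $\gamma$ follow from the moment inequality $\|(-\mathcal{A}_q)^\gamma x\|\le C\|x\|^{1-\gamma}\|\mathcal{A}_qx\|^{\gamma}$. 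Taking $\mu=\lambda^\alpha$ gives the bound $C|\lambda|^{\alpha(\gamma-1)}$ on $Ha$.

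\textbf{Step 3 (scaling).} Insert the bound from Step 2 into the integrals of Step 1 and choose the radius $r=1/t$. Then substitute $\lambda=z/t$, which maps $Ha$ onto a fixed Hankel path $Ha_1$ of radius $1$. The three families produce the following integrals:
\begin{itemize}
\item for $E_\alpha$, $t^{-\alpha\gamma}\int_{Ha_1}e^{\mathrm{Re}\,z}|z|^{\alpha\gamma-1}|dz|$;
\item for $S_\alpha$, $t^{1-\alpha\gamma}\int_{Ha_1}e^{\mathrm{Re}\,z}|z|^{\alpha\gamma-2}|dz|$;
\item for $R_\alpha$, $t^{-1-\alpha(\theta-\beta)}\int_{Ha_1}e^{\mathrm{Re}\,z}|z|^{\alpha(\gamma-1)}|dz|$, using $1-\alpha+\alpha\gamma=1+\alpha(\theta-\beta)$.
\end{itemize}
Each $z$-integral is a finite constant. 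On the rays, $\mathrm{Re}\,z=|z|\cos\eta_0\to-\infty$ because $\eta_0>\frac{\pi}{2}$, so the exponential kills any power of $|z|$. The arc is compact and avoids the origin. This yields exactly $t^{-\alpha(1+\theta-\beta)}$, $t^{1-\alpha(1+\theta-\beta)}$ and $t^{-1-\alpha(\theta-\beta)}$, with a single $M$ independent of $t>0$.

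\textbf{Main obstacle.} The step needing the most care is Step 2 on the full sector $\mathbb{S}_{\phi_q}$ rather than a half-plane. I would first check that $\mathcal{A}_q$, being sectorial with $0\in\rho(\mathcal{A}_q)$ (Dirichlet Laplacian on a bounded domain), has uniformly bounded $\|\mu(\mu-\mathcal{A}_q)^{-1}\|$ on the whole closed sector including near the origin. This uniformity is what lets a single constant cover both small and large $t$. The other point to verify is the justification of the contour representation itself. This is a consistency check with the definition of the families in \cite{deAS}: one checks by Laplace transform that these operators solve the homogeneous problem and the variation-of-constants formula.
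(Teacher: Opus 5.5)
The paper gives no proof of this lemma and simply imports it from \cite{deAS}. Your argument is correct and is the standard route by which such estimates are obtained: the Hankel-contour representation, the interpolated bound $\|(-\mathcal{A}_q)^{\gamma}(\mu-\mathcal{A}_q)^{-1}\|\le C|\mu|^{\gamma-1}$ with $\gamma=1+\theta-\beta\in[0,1)$, and the rescaling $\lambda=z/t$ on a contour of radius $1/t$, which gives a constant independent of $t>0$.

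Two details are worth writing out. First, $(-\mathcal{A}_q)^{\gamma}$ can be moved inside the integral because it is closed and the resulting integrand is absolutely integrable. Second, the resolvent bound is only needed on $\mathbb{S}_{\alpha\eta_0}$, which is exactly the estimate $\|(\lambda^\alpha-\mathcal{A}_q)^{-1}\|\le C|\lambda|^{-\alpha}$ recorded in Section 2, not on all of $\mathbb{S}_{\phi_q}$.
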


It follows from the above estimate that for all $t>0$, the operators $t^{\alpha\theta}E_{\alpha}(t\mathcal{A}_q):X^{1}_{q}\to X^{1+\theta}_{q}$ are bounded linear operators satisfying
$$\|t^{\alpha\theta}E_{\alpha}(t\mathcal{A}_q)\|_{\mathcal{L}(X^{1}_{q},X^{1+\theta}_{q})}\le M,$$
with $M>0$ independent of $t>0$, for all $0\le\theta\le 1$. Moreover, given a compact subset $J$ of $X^{1}_{q}$, we have
$$\lim_{t\to0^{+}}\sup_{x\in J}\|t^{\alpha\theta}E_{\alpha}(t\mathcal{A}_q)x\|_{X^{1+\theta}_{q}}=0.$$
Similar properties can be proved to $(S_{\alpha}(t\mathcal{A}_q))_{t\ge0}$ and $(R_{\alpha}(t\mathcal{A}_q))_{t\ge0}$, see \cite{deAS}.

\begin{remark}\label{rmkcritico}
We now revisit the discussion on the critical value \eqref{critico}. On the one hand, the strategy adopted in \cite{deAS} was, considering $\beta > 0$ such that $1-\frac{N}{2q'}<\beta<1$ and $1<\rho\leq 1+\frac{2q}{N}(1-\beta )$, to prove that the function $f$ in \eqref{diffwavep} is well-defined from $X^{1}_{q}$ into $X^{\beta}_{q}$, and is a locally Lipschitz function. Then, they define, on a suitable complete metric space, the nonlinear map $T$  by
$$Tu(t)=E_\alpha(t\mathcal{A}_q)u_0 + S_\alpha(t\mathcal{A}_q)u_1 + \int_0^t R_{\alpha}((t-s)\mathcal{A}_q)f(u(s))\,ds,\quad t\in [0,\tau].$$
Using the linear estimates presented in the above lemma and the locally Lipschitz property, the authors were led to analyze the convergence of the integral 
$$\int_{0}^{t}(t-s)^{\alpha\beta-1}ds,$$
which is equivalent to the fact that $\beta>0$\footnote{Note that $\beta>0$ implies $1<\rho\leq 1+\frac{2q}{N}(1-\beta )<1+\frac{2q}{N}.$}. That is, the entire argument collapses when $\beta=0$. 

On the other hand, it is important to highlight that the situation described above is not merely a technical issue. Indeed, the locally Lipschitz property is not a sufficient hypothesis to guarantee the existence of mild solutions for \eqref{diffwavep}. For instance, if we consider the function $f(u)=-2\mathcal{A}_qu$, which is globally Lipschitz from $X^{1}_{q}$ into $X^{0}_{q}$, the correspondent problem is given by
$$D^{\alpha}_{t}u=-\mathcal{A}_qu,$$ 
which is not locally well-posed. This argumentation underscores the critical role played by \eqref{critico}.
\end{remark}

We close this section studying some properties of the nonlinear term $f$ on  the scale of fractional powers spaces $\{X^{\gamma}_{q} \}_{\gamma \in \mathbb{R}}$ associated with ${\cal A}_q$. 
\begin{lema}\label{reg} 
Let $\rho>1$, $1 < q <\infty$ and $q = \frac{N(\rho - 1)}{2}$. For all $0< \varepsilon < \frac{N}{N+2q}$ the function
$$f:X^{1+\varepsilon}_{q}\to X^{\rho\varepsilon}_{q}$$
is well defined, and verifies
$$\|f(u)-f(v)\|_{X^{\rho\varepsilon}_{q}} \le c\left(\| u\|^{\rho-1}_{X^{1+\varepsilon}_{q}}+\| v\|^{\rho-1}_{X^{1+\varepsilon}_{q}}\right)\|u-v\|_{X^{1+\varepsilon}_{q}}$$
and 
$$\|f(u)\|_{X^{\rho\varepsilon}_{q}} \le c\|u\|_{X^{1+\varepsilon}_{q}}^{\rho}$$
for some $c>0$.
\end{lema}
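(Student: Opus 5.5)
The plan is to combine the pointwise estimate \eqref{fc} with Hölder's inequality and the embeddings \eqref{potfracmodLQ}. First I will check that the exponents fall in the admissible ranges.

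Since $0<\varepsilon<\frac{N}{N+2q}$ and $1+\varepsilon<1+\frac{N}{2q}$, the first line of \eqref{potfracmodLQ} with $\beta=1+\varepsilon$ gives
$$X^{1+\varepsilon}_{q}\hookrightarrow L^{r}(\Omega),\qquad r=\frac{Nq}{N-2\varepsilon q}.$$
For the target space I use the third line with $\beta=\rho\varepsilon$. This needs $\rho\varepsilon\le 1$ and $\rho\varepsilon>1-\frac{N}{2q'}$. Using $q=\frac{N(\rho-1)}{2}$, we have $\rho=1+\frac{2q}{N}$, so the hypothesis $\varepsilon<\frac{N}{N+2q}$ is exactly $\rho\varepsilon<1$. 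The lower bound I expect to follow from the same relations, possibly after a short computation; if it fails for small $\varepsilon$, I would instead embed into a slightly weaker space, which is harmless. We then get $L^{s}(\Omega)\hookrightarrow X^{\rho\varepsilon}_{q}$ with
$$s=\frac{Nq}{N+2q-2\rho\varepsilon q}.$$

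The key algebraic identity is $s=\frac{r}{\rho}$. Indeed,
$$\frac{\rho}{r}=\frac{\rho(N-2\varepsilon q)}{Nq}=\frac{\rho N-2\rho\varepsilon q}{Nq}.$$
Since $\rho N=N+2q$ by the criticality $q=\frac{N(\rho-1)}{2}$, this equals $\frac{1}{s}$. This is precisely where the critical relation is used. It also guarantees $s>1$, because $\frac{1}{s}=\frac{N+2q-2\rho\varepsilon q}{Nq}$ and $\rho\varepsilon<1$.

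Now I estimate the nonlinearity. By \eqref{fc},
$$|f(u)-f(v)|\le c\,(|u|^{\rho-1}+|v|^{\rho-1})\,|u-v|.$$
Apply Hölder with exponents $\frac{r}{\rho-1}$ and $r$, whose reciprocals sum to $\frac{\rho}{r}=\frac1s$. This gives
$$\|f(u)-f(v)\|_{L^{s}}\le c\left(\|u\|_{L^r}^{\rho-1}+\|v\|_{L^r}^{\rho-1}\right)\|u-v\|_{L^r}.$$
Combining the two embeddings yields the Lipschitz bound in $X^{\rho\varepsilon}_{q}$. Taking $v=0$ and using $f(0)=0$ gives both the growth estimate and well-definedness, since $f(u)\in L^{s}\subset X^{\rho\varepsilon}_{q}$.

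The main obstacle is the bookkeeping of the admissible ranges: the upper bound $\rho\varepsilon\le1$ and the lower bound $1-\frac{N}{2q'}<\rho\varepsilon$ for the negative-order (dual) embedding. I will verify these first, directly from the hypotheses on $\varepsilon$ and the relation $q=\frac{N(\rho-1)}{2}$.
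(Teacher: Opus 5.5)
Your route is the paper's route: the two embeddings from \eqref{potfracmodLQ}, the identity $N+2q-2\rho\varepsilon q=\rho(N-2\varepsilon q)$ (your $s=r/\rho$), H\"older, and then $v=0$. The gap is in the range check you flagged as the main obstacle. Your claim that $\rho\varepsilon<1$ guarantees $s>1$ is wrong: $\rho\varepsilon<1$ only gives $\frac1s>\frac1q$, i.e.\ $s<q$. In fact
$$s>1\iff 2\rho\varepsilon q>N+2q-Nq\iff \rho\varepsilon>1-\frac{N}{2q'},$$
so the lower bound in the third line of \eqref{potfracmodLQ} is exactly the condition $s>1$, and it does not follow from the hypotheses. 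In terms of $\varepsilon$ it reads $\varepsilon>\frac{N(N+2q-Nq)}{2q(N+2q)}$. This is automatic only when $N+2q-Nq\le 0$, i.e.\ when $N\ge 3$ and $q\ge\frac{N}{N-2}$. Otherwise it is a genuine restriction. For instance, $N=2$, $q=2$, $\rho=3$ requires $\varepsilon>\frac16$, and the paper's own example $N=3$, $q=2$, $\rho=\frac73$ requires $\varepsilon>\frac{3}{28}$.

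Your fallback is not harmless either: in that range the lemma itself fails. Suppose $s<1$. Then the interval $[\frac N\rho,\frac Nq-2\varepsilon)$ is nonempty; pick $a$ in it, a point $x_0\in\Omega$, and a cutoff $\chi\in C^\infty_c(\Omega)$ equal to $1$ near $x_0$. The function $u=\chi|x-x_0|^{-a}$ lies in $H^{2\varepsilon}_q$ with compact support, hence in $X^{1+\varepsilon}_q$. On the other hand, $f(u)=|u|^{\rho-1}u$ (an admissible nonlinearity under \eqref{fc}) is positive and not integrable near $x_0$. So $f(u)$ defines no distribution, and no element of $X^{\rho\varepsilon}_q$ or of any $X^\beta_q$.

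Lowering the target exponent would not help in any case. With target $X^\beta_q$, $\beta<\rho\varepsilon$, the key quantity $t^{\alpha\varepsilon}\int_0^t(t-s)^{-1-\alpha(\varepsilon-\beta)}s^{-\alpha\rho\varepsilon}ds$ equals a constant times $t^{\alpha(\beta-\rho\varepsilon)}$ (or diverges, if $\beta\le\varepsilon$). This is unbounded as $t\to0^+$, which breaks the fixed-point argument of Theorem \ref{local}.

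The paper's proof has the same omission: it applies the third line of \eqref{potfracmodLQ} with $\beta=\rho\varepsilon$ without checking $\rho\varepsilon>1-\frac{N}{2q'}$. Once that hypothesis is added to the statement, your argument, like the paper's, is complete.
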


\begin{proof}
From \eqref{potfracmodLQ}, it follows that
$$	X_{q}^{1+\epsilon}\hookrightarrow L^{\frac{Nq}{N-2\epsilon q}}(\Omega)\quad \mbox{and}\quad L^{\frac{Nq}{N+2q-2\rho\epsilon q}}(\Omega) \hookrightarrow X_{q}^{\rho\epsilon}.$$
Since
$$	N+2q-2\rho\epsilon q	 = N  + N(\rho-1) - 2\rho\epsilon q	=(N-2\epsilon q)\rho $$
we have $$\frac{Nq}{N+2q-2\rho\epsilon q} = \frac{Nq}{\rho(N-2\epsilon q)}.$$
Then, using \eqref{fc} we get
\begin{align*}
 \|f(u)-f(v)\|_{X^{\rho\epsilon}}  & \leq   \|f(u)-f(v)\|_{L^{\frac{Nq}{\rho(N-2\epsilon q)}}(\Omega)}\\
 & \le c\|u-v\|_{L^{\frac{Nq}{N-2\epsilon q}}(\Omega)}\left(\|u\|_{L^{\frac{Nq}{N-2\epsilon q}}(\Omega)}^{\rho-1}+\|v\|_{L^{\frac{Nq}{N-2\epsilon q}}(\Omega)}^{\rho-1}\right)\\
 & \le  \|u-v\|_{X_{q}^{1+\epsilon}}\left(\|u\|_{X_{q}^{1+\epsilon}}^{\rho-1}+\|v\|_{X_{q}^{1+\epsilon}}^{\rho-1}\right).
\end{align*}
Taking $v=0$,  we conclude the proof.
\end{proof}

\section{Proof of main results}\label{proofmain}
\subsection{Local well-posedness}

\begin{proof}[Proof of Theorem \ref{local}] 	We divide this proof in 3 main parts.

\noindent \textbf{Part 1 - Existence of $\varepsilon$-regular mild solution, proofs of (A) and (B):} Define $\mu>0$  by 
	$$Mc\mu^{\rho-1}B(\alpha(\rho\varepsilon - \varepsilon),1-\alpha\rho\varepsilon)=\frac{1}{4},$$
and choose $r=r(\mu,M)$ such that 
	$$r=\frac{\mu}{4M}.$$
For $v_0\in L^q(\Omega)$ fixed, choose $\tau_0\in(0,1]$ such that 
$$	t^{\alpha \varepsilon} \|E_\alpha(t\mathcal{A}_q)v_0\|_{X^{1+\varepsilon}_{q}} \le \frac{\mu}{4}\quad \mbox{and}\quad t \|u_1\|_{L^{q}(\Omega)} \le \frac{\mu}{4},$$
for all $0\le t\le \tau_0$. Consider
	\begin{equation*}
		K(\tau_0)= \left\{ u \in \mathcal{C}((0,\tau_0];X^{1+\varepsilon}_{q}) : \sup_{t \in (0,\tau_0]} t^{\alpha \varepsilon} \|u(t)\|_{X^{1+\varepsilon}_{q}} \le \mu \right\},
\end{equation*}
which the metric
\begin{equation*}
\mathrm{d}(u,v) = \sup_{t \in (0,\tau_0]} t^{\alpha \varepsilon} \|u(t)-v(t)\|_{X^{1+\varepsilon}_{q}},
\end{equation*}
and define the map $T$ on $K(\tau_0)$ by
\begin{equation*}
	Tu(t)= E_\alpha(t\mathcal{A}_q)u_0 + S_\alpha(t\mathcal{A}_q)u_1 + \int_0^t R_{\alpha}((t-s)\mathcal{A}_q)f(u(s))\,ds, ~t \in (0,\tau_0].
\end{equation*}

Let us prove that $T:K(\tau_0) \to K(\tau_0)$ is a contraction. 
Consider $t_1, t_2 \in (0,\tau_0]$ such that $t_1>t_2$. For any $0\le \theta < \rho\varepsilon$ and $u \in K(\tau_0)$, we have
			\begin{eqnarray*}
			 \|(Tu)(t_1)-(Tu)(t_2)\|_{X^{1+\theta}_{q}}&\le& \|E_\alpha(t_1\mathcal{A}_q)u_0 - E_\alpha(t_2\mathcal{A}_q)u_0\|_{X^{1+\theta}_{q}}   +  \|S_\alpha(t_1\mathcal{A}_q)u_1 - S_\alpha(t_2\mathcal{A}_q)u_1\|_{X^{1+\theta}_{q}} \\
			&  + & \mathcal{I}, 
			\end{eqnarray*}
			where
			\begin{equation*}
				\mathcal{I}=\left\|  \int_{0}^{t_{1}} R_{\alpha}((t_1-s)\mathcal{A}_q)f(u(s))\,ds - \int_{0}^{t_{2}} R_{\alpha}((t_2-s)\mathcal{A}_q)f(u(s))\,ds\right\|_{X^{1+\theta}_{q}}.  
			\end{equation*}
			Since $(E_{\alpha}(t\mathcal{A}_q))_{t\ge0}$ and $(S_{\alpha}(t\mathcal{A}_q))_{t\ge0}$ are strongly continuous, we have that 
			$$\|E_\alpha(t_1\mathcal{A}_q)u_0 - E_\alpha(t_2\mathcal{A}_q)u_0\|_{X^{1+\theta}_{q}}\to 0  \quad \mbox{and} \quad   \|S_\alpha(t_1\mathcal{A}_q)u_1 - S_\alpha(t_2\mathcal{A}_q)u_1\|_{X^{1+\theta}_{q}}\to 0,$$
			as $t_{1}\to t_{2}^{+}$.		To show that $\mathcal{I}$ has this same property,  note that
			\begin{eqnarray*}
				\mathcal{I} &\le&  \int_{0}^{t_{2}} \| R_{\alpha}((t_2-s)\mathcal{A}_q)f(u(s)) - R_{\alpha}((t_1-s)\mathcal{A}_q)f(u(s))\|_{X^{1+\theta}_{q}}\,ds\\
				& + &  \int_{t_2}^{t_{1}} \| R_{\alpha}((t_1-s)\mathcal{A}_{q})f(u(s))\|_{X^{1+\theta}_{q}}\,ds.
			\end{eqnarray*}
The strong continuity of $(R_{\alpha}(t\mathcal{A}_q))_{t\ge0}$ and the Lebesgue's dominated convergence theorem ensure that the first term on the right side goes to $0$ as $t_{1}\to t_{2}^{+}$.	For the second term, we have
			\begin{eqnarray*}
 \int_{t_2}^{t_{1}} \|R_{\alpha}((t_1-s)\mathcal{A}_q)f(u(s))\|_{X^{1+\theta}_{q}}\,ds
				\le  M\int_{t_2}^{t_{1}} (t_1 -s)^{-1-\alpha(\theta - \rho\varepsilon)}\|f(u(s))\|_{X^{\rho\varepsilon}_{q}}\,ds
			\end{eqnarray*}
			\begin{eqnarray*}
				&\le & Mc\int_{t_2}^{t_{1}} (t_1 -s)^{-1-\alpha(\theta - \rho\varepsilon)}\|u(s)\|_{X^{1+\varepsilon}_{q}}^{\rho}\,ds\\
				&\le &  Mc \mu^{\rho}t_{1}^{-\alpha\theta}\int_{t_2/t_1}^{1} (1 -s)^{-1-\alpha(\theta - \rho\varepsilon)}s^{-\alpha\rho\varepsilon}\,ds\\
			\end{eqnarray*}
			and so, we have to
			\begin{equation*}
				\lim_{t_1 \to t_2^+}  \int_{t_2}^{t_{1}} \|R_{\alpha}((t_1-s)\mathcal{A}_q)f(u(s))\|_{X^{1+\theta}_{q}}\,ds = 0.
			\end{equation*}
The case $t_1< t_2 $ is similar. To conclude that $K(\tau_0)$ is $T$-invariant, let $t \in (0,\tau_0]$ and $u \in K(\tau_0)$. Then
			\begin{eqnarray*}
				t^{\alpha \varepsilon}\|Tu(t)\|_{X^{1+\varepsilon}_{q}} &\le&   t^{\alpha \varepsilon}\|E_\alpha(t\mathcal{A}_q)u_0 \|_{X^{1+\varepsilon}_{q}} +  t^{\alpha \varepsilon}\|S_\alpha(t\mathcal{A}_q)u_1\|_{X^{1+\varepsilon}_{q}}\\
				& + & t^{\alpha \varepsilon}\int_0^t \|R_{\alpha}((t-s)\mathcal{A}_q)f(u(s))\|_{X^{1+\varepsilon}_{q}}\,ds\\
				&\le& t^{\alpha \varepsilon}\|E_\alpha(t\mathcal{A}_q)u_0 - E_\alpha(t\mathcal{A})v_0\|_{X^{1+\varepsilon}_{q}} +  t^{\alpha \varepsilon}\|E_\alpha(t\mathcal{A}_q)v_0\|_{X^{1+\varepsilon}_{q}}\\
				 & + & t^{\alpha \varepsilon}\|S_\alpha(t\mathcal{A}_q)u_1\|_{X^{1+\varepsilon}_{q}}
				 + t^{\alpha \varepsilon}\int_0^t \|R_{\alpha}((t-s)\mathcal{A}_q)f(u(s))\|_{X^{1+\varepsilon}_{q}}\,ds\\
				&\le& \frac{3\mu}{4} + Mc  \int_0^t (t-s)^{-1 - \alpha(\varepsilon - \rho\varepsilon)}\|u(s)\|_{X^{1+\varepsilon}_{q}}^{\rho}\,ds\\
				&\le& \frac{3\mu}{4} + Mc\mu^{\rho}B(\alpha(\rho\varepsilon - \varepsilon),1-\alpha\rho\varepsilon)	\le \mu,
			\end{eqnarray*}
			and so $T:K(\tau_0) \to K(\tau_0)$ is well defined.

	Now, observe that if $u,v \in K(\tau_0)$ and $t\in[0,\tau_0]$, then
	\begin{eqnarray*}
		t^{\alpha \varepsilon}  \|Tu(t) - Tv(t)\|_{X^{1+\varepsilon}_{q}}
		&\le& t^{\alpha \varepsilon}\int_0^t \|R_{\alpha}((t-s)\mathcal{A}_q)[f(u(s))-f(v(s))]\|_{X^{1+\varepsilon}_{q}}\,ds\\
		&\le& Mt^{\alpha \varepsilon}\int_0^t (t-s)^{-1-\alpha(\varepsilon-\rho\varepsilon)}\|f(u(s))-f(v(s))\|_{X^{\rho\varepsilon}_{q}}\,ds\\
&\le& Mct^{\alpha \varepsilon}\int_0^t (t-s)^{-1-\alpha(\varepsilon-\rho\varepsilon)}(\|u(s)\|_{X^{1+\varepsilon}}^{\rho - 1}+\|v(s)\|_{X^{1+\varepsilon}_{q}}^{\rho - 1})\|u(s)-v(s)\|_{X^{1+\varepsilon}_{q}}\,ds\\
		&\le& \left(2 Mc \mu^{\rho - 1} t^{\alpha \varepsilon}\int_0^t (t-s)^{-1-\alpha(\varepsilon-\rho\varepsilon)}s^{- \alpha\rho\varepsilon}\,ds\right)\mathrm{d}(u,v)\\
		&\le& \left(2 Mc \mu^{\rho - 1} B(\alpha(\rho\varepsilon - \varepsilon),1-\alpha\rho\varepsilon)	\right)\mathrm{d}(u,v)\le \frac{1}{2}\ \mathrm{d}(u,v).
	\end{eqnarray*}
By the Banach fixed point theorem, $T$ has a unique fixed point $u \in K(\tau_0)$. 

We proved that $u \in \mathcal{C}((0,\tau_0];X^{1+\theta}_{q}) $, for all $0\le \theta < \rho\varepsilon$. Furthermore, 	
$$\lim_{t\to0}t^{\alpha\theta}\|u(t)\|_{X^{1+\theta}_{q}}= 0,\quad \forall\ 0<\theta<\rho\epsilon. $$
Indeed, it follows that
\begin{eqnarray*}
	t^{\alpha\theta}\|u(t)\|_{X^{1+\theta}_{q}} & \le & t^{\alpha\theta}\|E_{\alpha}(t\mathcal{A}_{q})u_0\|_{X^{1+\theta}_{q}}+ t^{\alpha\theta}\|S_{\alpha}(t\mathcal{A}_{q})u_1\|_{X^{1+\theta}_{q}}\\
	& + & t^{\alpha\theta}\int_{0}^{t}\|R_{\alpha}((t-s)\mathcal{A}_{q})f( u(s))\|_{X^{1+\theta}_{q}}ds\\
	& \le & t^{\alpha\theta}\|E_{\alpha}(t\mathcal{A}_{q})u_0\|_{X^{1+\theta}_{q}}+ t^{\alpha\theta}\|S_{\alpha}(t\mathcal{A}_{q})u_1\|_{X^{1+\theta}_{q}}\\
	& + &Mc \mu^{\rho-1}(\alpha(\rho\epsilon-\theta),1-\alpha\rho\epsilon)\sup_{0<s\le
		t}\{s^{\alpha\epsilon}\|u(s)\|_{X^{1+\varepsilon}_{q}}\}.
\end{eqnarray*}
Therefore, if $\theta=\epsilon$ we deduce
\begin{eqnarray*}\label{supcon}
	t^{\alpha\epsilon}\|u(t)\|_{X^{1+\varepsilon}_{q}} & \le & t^{\alpha\epsilon}\|E_{\alpha}(t\mathcal{A}_{q})u_0\|_{X^{1+\varepsilon}_{q}}+ t^{\alpha\epsilon}\|S_{\alpha}(t\mathcal{A}_{q})u_1\|_{X^{1+\varepsilon}_{q}}
	 + \frac{1}{4}\sup_{0<s\le
		t}\{s^{\alpha\epsilon}\|u(s)\|_{X^{1+\varepsilon}_{q}}\}.
\end{eqnarray*}
from which we obtain
$$ \sup_{0<s\le t}\{s^{\alpha\epsilon}\|u(s)\|_{X^{1+\varepsilon}_{q}}\}\le \frac{4}{3}\left(\sup_{0<s\le t}\{s^{\alpha\epsilon}\|E_{\alpha}(s\mathcal{A}_{q})u_0\|_{X^{1+\varepsilon}_{q}}+s^{\alpha\epsilon}\|S_{\alpha}(t\mathcal{A}_{q})u_1\|_{X^{1+\varepsilon}_{q}}\}\right)\to0,$$
as $t$ goes to $0$.  

The above estimate also ensures that $\lim_{t\to0^{+}}\|u(t)-u_0\|_{L^{q}(\Omega)}=0.$ In fact, we have
\begin{eqnarray*}
	\|u(t)-u_0\|_{L^{q}(\Omega)} &\le & \|E_{\alpha}(t\mathcal{A}_{q})u_0-u_0\|_{L^{q}(\Omega)}+ \|S_{\alpha}(t\mathcal{A}_{q})u_1\|_{L^{q}(\Omega)}\\
	& + & \int_{0}^{t}\|R_{\alpha}((t-s)\mathcal{A}_{q})f(u(s))\|_{L^{q}(\Omega)}ds\\
\end{eqnarray*}
$$\le \|E_{\alpha}(t\mathcal{A}_{q})u_0-u_0\|_{L^{q}(\Omega)}+  t\|u_1\|_{L^{q}(\Omega)} +  McB(\alpha\rho\epsilon,1-\alpha\rho\epsilon)\big(\sup_{0<s\le t}\{s^{\alpha\epsilon}\|u(s)\|_{X^{1+\varepsilon}_{q}}\}\big)^\rho.$$
Therefore,  we have that $u=u(\cdot\ ;u_0, u_1)$ is an $\epsilon$-regular mild solution to the problem \eqref{diffwavep}. It follows from \eqref{supcon} that if $J \subset B_{r} (v_0) $ is compact, then
$$\lim_{t\to0^+}t^{\alpha\varepsilon}\sup_{u_0, u_1 \in J}\|u(t;u_0, u_1)\|_{X^{1+\varepsilon}_{q}}=0,$$
and this proves $(A)$. 

To prove $(B)$,  consider $u_0,u_1,w_0,w_1 \in B_{r}(v_0) \subset L^{q}(\Omega)$. For $t\in (0,\tau_0]$ and $\theta \in [0, \rho\varepsilon)$, it follows that
\begin{eqnarray*}
	t^{\alpha \theta}\|u(t;u_0,u_1)-u(t;w_0,w_1)\|_{X^{1+\theta}_{q}} &\le&  M(\|u_0-w_0\|_{L^{q}(\Omega)}+\|u_1-w_1\|_{L^{q}(\Omega})\nonumber \\
	& + & \Gamma_{\theta}(t)\sup_{0< t \le \tau_0} t^{\alpha\theta}\|u(t;u_0,u_1)-u(t;w_0,w_1)\|_{X^{1+\theta}_{q}},
\end{eqnarray*}
where
$$	\Gamma_{\theta}(t) = Mc \mbox{B}(\alpha(\rho\varepsilon-\theta),1-\alpha\rho\varepsilon)\left((\sup_{0< t \le \tau_0} t^{\alpha\theta}\|u(t;u_0,u_1)\|_{X^{1+\theta}_{q}})^{\rho-1} + (\sup_{0< t \le \tau_0} t^{\alpha\theta}\|u(t;w_0,w_1)\|_{X^{1+\theta}_{q}})^{\rho-1}\right).$$
Then,
\begin{eqnarray*}
	\sup_{0< t \le \tau_0} t^{\alpha \theta}\|u(t;u_0,u_1)-u(t;w_0,w_1)\|_{X^{1+\theta}_{q}} &\le&  M(\|u_0-w_0\|_{L^{q}(\Omega)}+\|u_1-w_1\|_{L^{q}(\Omega)})\\
	&+& \frac{3}{4}\sup_{0< t \le \tau_0} t^{\alpha\theta}\|u(t;u_0,u_1)-u(t;w_0,w_1)\|_{X^{1+\theta}_{q}},
\end{eqnarray*}
and so
\begin{eqnarray*}
	\sup_{0< t \le \tau_0} t^{\alpha \theta}\|u(t;u_0,u_1)-u(t;w_0,w_1)\|_{X^{1+\theta}_{q}} &\le&  4M(\|u_0-w_0\|_{L^{q}(\Omega)}+\|u_1-w_1\|_{L^{q}(\Omega)}).
\end{eqnarray*}
Consequently,
\begin{equation*}
	t^{\alpha \theta}\|u(t;u_0,u_1)-u(t;w_0,w_1)\|_{X^{1+\theta}_{q}}\le  \Bar{c}(\|u_0-w_0\|_{L^{q}(\Omega)}+\|u_1-w_1\|_{L^{q}(\Omega)}) ,
\end{equation*}
where,
\begin{equation*}
	\Bar{c}=M(1 + 4 \sup \{ \Gamma_{\theta}(t) : 0< t \le \tau_0 \} ).
\end{equation*}

\noindent \textbf{Part 2 - Proof of (C):} Let $\tau_1>0$ and $v:[0,\tau_1]\to X^{1}_{q}$ be an $\epsilon$-regular mild solution on $[0,\tau_1]$ for the problem \eqref{diffwavep} satisfying
$$\lim_{t\to0^+} t^{\alpha\varepsilon}\|v(t)\|_{X^{1+\varepsilon}_{q}}=0.$$
Then, we can choose $\tau \in (0,\min\{\tau_1,\tau_0\}]$ small enough so that
$$\sup_{0<t\leq  \tau}t^{\alpha\epsilon}\|v(t)\|_{X^{1+\epsilon}_{q}}\leq \mu.$$
Therefore, the constraints of the functions $v$ and $u(\cdot\,;u_0,u_1)$ to the interval $(0, \tau]$ belong to the set
$$K(\tau):=\left\lbrace \phi \in C\left((0, \tau];X^{1+\epsilon}_{q}\right);\, \sup_{0<t\leq  \tau}t^{\alpha\epsilon}\|\phi(t)\|_{X^{1+\epsilon}_{q}}\leq \mu\right\rbrace$$
and they are fixed points of the map $\widetilde{T}$ defined on $K(\tau)$ by
$$(\widetilde{T}\phi)(t)=E_\alpha(t\mathcal{A}_q)u_0 + S_\alpha(t\mathcal{A}_q)u_1 + \int_0^t R_{\alpha}((t-s)\mathcal{A}_q)f(\phi(s))\,ds, \quad t \in (0, \tau].$$
We note that $\tau>0$ was taken arbitrarily small, so the proof that $\widetilde{T}$ is a contraction on $K(\tau)$ is entirely analogous to the proof for $T$ on $K(\tau_0)$. Then, by the uniqueness of fixed points for $\widetilde{T}$ on $K(\tau)$, we have that $v(t)=u(t;u_0,u_1)$ for all $t \in (0, \tau]$. Actually, since $v(0)=u_0=u(0;u_0,u_1)$, the equality holds on $[0, \tau]$. On the other hand, denoting
$$\kappa:=\sup_{s \in \left[ \tau, \min\{ \tau_1,\tau_0\}\right]}\left(\|v(s)\|_{X^{1+\epsilon}_{q}}^{\rho-1}+\|u(s;u_0,u_1)\|_{X^{1+\epsilon}_{q}}^{\rho-1}\right)<\infty,$$
for each $t \in [ \tau,\min\{ \tau_1,\tau_0\}]$, we have
\begin{align*}
	&\|v(t)-u(t;u_0,u_1)\|_{X^{1+\epsilon}_{q}}\leq \int_{0}^{t}\left\|R_{\alpha}((t-s)\mathcal{A}_q)f\left(v(s)\right)-R_{\alpha}((t-s)\mathcal{A}_q)f\left(u(s;u_0,u_1)\right)\right\|_{X^{1+\epsilon}_{q}}ds\\
	\leq&\, M \int_{0}^{t}(t-s)^{-\alpha(1+\epsilon-\rho\varepsilon)}\|f(v(s))-f(u(s;u_0,u_1))\|_{X^{\rho\varepsilon}_{q}}ds\\
	\leq&\,
	Mc\int_{ \tau}^{t}(t-s)^{-\alpha(1+\epsilon-\rho\varepsilon)}\|v(s)-u(s;u_0,u_1)\|_{X^{1+\epsilon}_{q}}\left(\|v(s)\|_{X^{1+\epsilon}_{q}}^{\rho-1}+\|u(s;u_0,u_1)\|_{X^{1+\epsilon}_{q}}^{\rho-1}\right)ds\\
	\leq&\,
	Mc\kappa\int_{ \tau}^{t}(t-s)^{-\alpha(1+\epsilon-\rho\varepsilon)}\|v(s)-u(s;u_0,u_1)\|_{X^{1+\epsilon}_{q}}ds.
\end{align*}
It follows from Singular Grönwall's Inequality that
$$\|v(t)-u(t;u_0,u_1)\|_{X^{1+\epsilon}_{q}}=0, \quad \forall \,t \in[\tau,\min\{\tau_1,\tau_0\}].$$
Therefore, $v(t)=u(t;u_0,u_1)$ for all $t \in [0,\min\{ \tau_1,\tau_0\}]$, as (C) states.

\noindent \textbf{Part 3 - Proof of (D):} Let $u=u(\cdot\ ; u_0,u_1)$ be an $\varepsilon$-regular mild solution to problem \eqref{diffwavep} in $(0,\tau_0]$. Denote by
$\kappa = 1 + (\tau_{0} + 1)^{\alpha\varepsilon} \|u(\tau_0) \|_{X^{1+\varepsilon}_{q}}$
and choose $\tau_1 \in (\tau_0,\tau_0 + 1]$ arbitrarily close to $\tau_0$ such that
\begin{equation*}
	t^{\alpha \varepsilon}\|E_\alpha(t\mathcal{A}_q)u_0 - E_\alpha(\tau_0\mathcal{A}_q)u_0\|_{X^{1+\varepsilon}_{q}}\le \frac{1}{4},\quad 	t^{\alpha \varepsilon}\|S_\alpha(t\mathcal{A}_q)u_1 - S_\alpha(\tau_0\mathcal{A}_q)u_1\|_{X^{1+\varepsilon}_{q}} \le \frac{1}{4},
\end{equation*}
\begin{equation*}
	t^{\alpha \varepsilon}\left\|\int_0^{\tau_0}  [R_{\alpha}((t-s)\mathcal{A}_q)-R_{\alpha}((\tau_0-s)\mathcal{A}_q)]f(u(s))ds\right\|_{X^{1+\varepsilon}_{q}}  \le \frac{1}{4},  
\end{equation*}
$${Mc\kappa^{\rho} \int_{\tau_0 /t}^t (1-s)^{-1-\alpha(\varepsilon-\rho\varepsilon)}s^{-\alpha\rho\varepsilon}ds \le \frac{1}{4}}\quad \mbox{and}\quad Mc\kappa^{\rho-1}\bigg( \int_{\tau_0 / t}^{1} (1-s)^{-1-\alpha(\varepsilon-\rho\varepsilon)}s^{-\alpha\rho\varepsilon}ds\bigg)\le \frac{1}{4},$$
for all $t \in [\tau_0,\tau_1]$. Let $\mathcal{S}$ be the set of all $v\in \mathcal{C}((0,\tau_1]; X^{1+\varepsilon}_{q})$ such that $v(t)=u(t)$,  for  $t \in (0,\tau_0]$, and 
\begin{equation*}
 \sup_{t \in [\tau_0, \tau_1]} t^{\alpha \varepsilon}\|v(t) - u(\tau_0) \|_{X^{1+\varepsilon}_{q}} \le 1.    
\end{equation*}
Then $\mathcal{S}$ is a complete metric space with metric
$$
	d(v,w) = \sup_{t \in (0, \tau_1]} t^{\alpha \varepsilon}\|v(t)-w(t)\|_{X^{1+\varepsilon}_{q}}.
$$
Define in $\mathcal{S}$ the map 
\begin{equation*}
	Tv(t)= E_\alpha(t\mathcal{A}_q)u_0 + S_\alpha(t\mathcal{A}_q)u_1 + \int_0^t R_{\alpha}((t-s)\mathcal{A}_q)f(v(s))ds, \quad t \in (0,\tau_1].
\end{equation*}

Similarly to the Part 1,  we can prove that $Tv \in \mathcal{C}((0,\tau_1];X^{1+\theta}_{q})$, for all $\theta \in [0, \rho\varepsilon)$, and any $v \in \mathcal{S}$. Furthermore, if  $t\in (0,\tau_0]$, then $Tv(t) = Tu(t) = u(t)$. Thereby, to prove that $T v \in \mathcal{S}$, we just need to check that
\begin{equation*}
	\sup_{t \in [\tau_0,\tau_1]} \|Tv(t) - u(\tau_0)\|_{X^{1+\varepsilon}_{q}} \le 1.
\end{equation*}
Indeed, for $t \in [\tau_0,\tau_1]$, we obtain
\begin{eqnarray*}
t^{\alpha \varepsilon}\|Tv(t) - u(\tau_0)\|_{X^{1+\varepsilon}_{q}}	&\le&   t^{\alpha \varepsilon}\|E_\alpha(t\mathcal{A}_q)u_0 - E_\alpha(\tau_0\mathcal{A}_q)u_0\|_{X^{1+\varepsilon}_{q}} +  t^{\alpha \varepsilon}\|S_\alpha(t\mathcal{A}_q)u_1 - S_\alpha(\tau_0\mathcal{A}_q)u_1\|_{X^{1+\varepsilon}_{q}}\\
	& + & t^{\alpha \varepsilon} \left\|\int_0^t  R_{\alpha}((t-s)\mathcal{A}_q)f(v(s))ds - \int_0^{\tau_0}  R_{\alpha}((\tau_0-s)\mathcal{A}_q)f(u(s))ds\right\|_{X^{1+\varepsilon}_{q}}\\
	&\le& \frac{1}{4} +  \frac{1}{4}+t^{\alpha \varepsilon}\left\|\int_0^{\tau_0}  [R_{\alpha}((t-s)\mathcal{A}_q)-R_{\alpha}((\tau_0-s)\mathcal{A}_q)]f(u(s))ds\right\|_{X^{1+\varepsilon}_{q}}\\
	& + & t^{\alpha \varepsilon} \left\|\int_{\tau_0}^t  R_{\alpha}((t-s)\mathcal{A}_q)f(v(s))ds\right\|_{X^{1+\varepsilon}_{q}}\\
	&\le& \frac{3}{4}+ {Mc\kappa^{\rho} \int_{\tau_0 /t}^t (1-s)^{-1-\alpha(\varepsilon-\rho\varepsilon)}s^{-\alpha\rho\varepsilon}ds} \le 1,
\end{eqnarray*}
which proves that $ \mathcal{S}$ is $T$-invariant. Note also that for any $v, w \in \mathcal{S}$, and all $t \in [\tau_0, \tau_1]$, we have
\begin{eqnarray*}
t^{\alpha \varepsilon}  \|Tv(t) - Tw(t)\|_{X^{1+\varepsilon}_{q}}	&\le& t^{\alpha \varepsilon}\int_{\tau_0}^{\tau_1} \|R_{\alpha}((t-s)\mathcal{A}_q)[f(v(s))-f(w(s))]\|_{X^{1+\varepsilon}_{q}}ds\\
	&\le&2Mc{\kappa^{\rho-1}}\bigg( \int_{\tau_0 / t}^{1} (1-s)^{-1-\alpha(\varepsilon-\rho\varepsilon)}s^{-\alpha\rho\varepsilon}ds\bigg)\sup_{s \in (0,\tau_1]} s^{{\alpha\varepsilon}}\|v(s)-w(s)\|_{X^{1+\varepsilon}_{q}}\\
	&\le&\frac{1}{2}\sup_{s \in (0,\tau_1]}s^{{\alpha\varepsilon}}\|v(s)-w(s)\|_{X^{1+\varepsilon}_{q}}.
\end{eqnarray*}
Consequently, $T$ is a strict contraction on $\mathcal{S}$ and, by the Banach fixed point theorem, it has a unique fixed
point $v \in \mathcal{S}$,  which is a continuation of $u(\cdot\ ;u_0,u_1)$ on $[0, \tau_1]$.  

To prove the uniqueness of the continuation, suppose that $u=u(\cdot\ ;,u_0,u_1)$ admits another continuation $w$ defined on some interval $[0,\tau']$, with $\tau_1 \ge \tau'$. For each $t \in [\tau', \tau_1]$, we have
\begin{eqnarray*}
	 \|v(t) - w(t)\|_{X^{1+\varepsilon}_{q}} 
	&\le& M\int_{\tau_0}^{t} (t-s)^{-1-\alpha(\varepsilon-\rho\varepsilon)}\|f(v(s))-f(w(s))\|_{X^{\rho\varepsilon}_{q}}\,ds\\
	&\le& \bar{c}  \int_{\tau_0}^{t} (t-s)^{-1-\alpha(\varepsilon-\rho\varepsilon)}\|v(s)-w(s)\|_{X^{1+\varepsilon}_{q}}\,ds,
\end{eqnarray*}
where	$\bar{c} := Mc\sup_{s \in [\tau_0,\tau]} \{ \|v(s)\|_{X^{1+\varepsilon}_{q}}^{\rho - 1}+\|w(s)\|_{X^{1+\varepsilon}_{q}}^{\rho - 1}\}$. It follows from Singular Grönwall's Inequality that
\begin{equation*}
	\|v(t)-w(t)\|_{X^{1+\varepsilon}_{q}} = 0, ~\forall t \in [\tau_0 ,\tau'].
\end{equation*}
Since $v(t) = u(t)= w(t)$,  for all $ t \in [0,\tau_0]$, we conclude that $v(t)= w(t)$, for all $t \in [0, \tau']$.

To conclude the proof of (D),  let $u=u(\cdot\ ;u_0, u_1)$ be the $\varepsilon$-regular mild solution of problem \eqref{diffwavep} satisfying
$$\lim_{t\to0^+} t^{\alpha\varepsilon}\|u(t)\|_{X^{1+\varepsilon}_{q}}=0.$$
Consider $\tau_{max} > 0$ its  maximal time of existence. 	Suppose by contradiction that $\tau_{max}<+\infty$  and 
\begin{equation*}
	\sup_{t \in (0,\tau_{max})}t^{\alpha\varepsilon}\|u(t)\|_{X^{1+\varepsilon}_{q}} < +\infty.
\end{equation*}
Let $(t_n)_{n=1}^{\infty}$ a sequence in $(0, \tau_{max})$ with $\lim_{n \to \infty}t_n = \tau_{max}$. Given $m, n \in \mathbb{N}$, without loss of generality,  suppose $0<t_n < t_m < \tau_{max}$. Then,
\begin{eqnarray*}
	 \|u(t_m)-u(t_n)\|_{X^{1+\eps}_{q}} & \le & \|E_\alpha(t_m\mathcal{A}_q)u_0 - E_\alpha(t_n\mathcal{A}_q)u_0\|_{X^{1+\varepsilon}_{q}} + \|S_\alpha(t_m\mathcal{A}_q)u_1 - S_\alpha(t_n\mathcal{A}_q)u_1\|_{X^{1+\varepsilon}_{q}}\\
	&+& \left\|\int_{0}^{t_{m}} R_{\alpha}((t_m-s)\mathcal{A}_q)f(u(s))\,ds - \int_{0}^{t_{n}} R_{\alpha}((t_n-s)\mathcal{A}_q)f(u(s))\,ds\right\|_{X^{1+\varepsilon}_{q}}. 
\end{eqnarray*}
By strong continuity of $(E_{\alpha}(t\mathcal{A}_q))_{t\ge 0}$ and $(S_{\alpha}(t\mathcal{A}_q))_{t\ge 0}$,  the two first terms of
the right side in the above inequality go to zero as $m, n \to \infty$. In the same way, the strong continuity of $(R_{\alpha}(t\mathcal{A}_q))_{t\ge 0}$ combined with the Lebesgue's dominated convergence theorem ensure that the third therm  goes to $0$ as $m, n \to \infty$. Therefore, $(u(t_n))_{n=1}^{\infty} \subset X^{1+\varepsilon}_{q}$ is a Cauchy sequence  in a Banach space and thus there exists $\Bar{u} \in X^{1+\varepsilon}_{q}$ such that  $\lim_{n \to \infty} \| u(t_n) - \Bar{u}\|_{X^{1+\varepsilon}_{q}}=0$.
With this, we can extend $u$ to $[0, \tau_{max}]$ obtaining the equality
\begin{equation*}
	u(t)= E_\alpha(t\mathcal{A}_q)u_0 + S_\alpha(t\mathcal{A}_q)u_1 + \int_0^t R_{\alpha}((t-s)\mathcal{A}_q)f(u(s))\,ds, \quad \forall t \in [0,\tau_{max}],
\end{equation*}
which contradicts the maximality of $\tau_{max}$.
\end{proof}

\begin{remark}\label{rmk}
	In Theorem \ref{local} we consider the initial data $u_0$ and $u_1$ belonging to $X^{1}_{q}=L^{q}(\Omega)$. However, the same conclusions are true if $u_0\in L^{q}(\Omega)$, and the second initial condition has even less regularity. Indeed, if we consider $u_1\in X^{\beta}_{q}$, for any $\beta \in (1-\frac{1}{\alpha}, 1]$, then $$\alpha\eps+1-\alpha(1+\eps-\beta)=1-\alpha+\alpha\beta>0$$ and, consequently,
	we can choice $\tau_0\in(0,1]$ at the beginning of the proof of Theorem \ref{local} such that 
	$$ t^{1-\alpha+\alpha\beta} \|u_1\|_{X^{\beta}_{q}} \le \frac{\mu}{4},$$
	for all $0\le t\le \tau_0$. The sequence of the proof follows the same steps with some adaptations.
\end{remark}

\subsection{Global well-posedness and asymptotic behavior}

\begin{proof}[Proof of Theorem \ref{globalexistence}]
	This proof is very similar to Part 1 of the proof of Theorem \ref{local}. Hence, we only point out the differences.	Define $\mu>0$ by  
	$$ Mc\mu^{\rho-1}{\bf B}=\frac{1}{4},$$ 
	where ${\bf B}={ B}\left(\alpha\left(\rho\eps-\eps\right),1-\alpha\rho\eps\right)$.
	Consider the complete metric space
	$$K=\left\{u\in C((0,\infty), X^{1+\eps}_{q}): \sup_{t>0}t^{\alpha\eps}\|u(t)\|_{X^{1+\eps}_{q}}\leq\mu\right\}$$
	with metric $\mathrm{d}(u,v) = \sup_{t>0} t^{\alpha \varepsilon} \|u(t)-v(t)\|_{X^{1+\varepsilon}_{q}}$. Define the operator
	$$Tu(t)=E_\alpha(t\mathcal{A}_q)u_0 + S_\alpha(t\mathcal{A}_q)u_1 + \int_0^t R_{\alpha}((t-s)\mathcal{A}_q)f(u(s))\,ds,\quad t>0.$$
	The continuity of $Tu:(0,\infty)\to X^{1+\eps}_{q}$ is proved in the same way as in Theorem \ref{local}. The main difference is to ensure that $K$ is a $T$-invariant set and a contraction. Indeed, if $t>0$ and $M(\|u_0\|_{L^q(\Omega)}+\|u_1\|_{X^{1-\frac{1}{\alpha}}_{q}})\le\frac{\mu}{4}$, then
	\begin{eqnarray*}
		t^{\alpha\eps}	\|Tu(t)\|_{X^{1+\eps}_{q}} & \le & t^{\alpha\eps}\|E_\alpha(t\mathcal{A}_q)u_0\|_{X^{1+\eps}_{q}}+ t^{\alpha\eps} \|S_\alpha(t\mathcal{A}_q)u_1\|_{X^{1+\eps}_{q}}\\
		&+& t^{\alpha\eps} \int_{0}^{t}\|R_{\alpha}((t-s)\mathcal{A}_q)f(u(s))\|_{X^{1+\eps}_{q}}ds\\
		&\le& M\|u_0\|_{L^{q}(\Omega)}+M\|u_1\|_{X^{1-\frac{1}{\alpha}}_{q}}\\
		&+& Mct^{\alpha\eps} \int_{0}^{t}(t-s)^{-1-\alpha(\eps-\rho\eps)}s^{-\alpha\rho\eps}\left(s^{\alpha\eps}\|u(s)\|_{L^{q}(\Omega)}\right)^{\rho}ds\\
		&\le& M(\|u_0\|_{L^{q}(\Omega)}+\|u_1\|_{X^{1-\frac{1}{\alpha}}_{q}})+ Mc\mu^{\rho}{\bf B}\\
		&\le& \frac{\mu}{4}+\frac{\mu}{4}<\mu,
	\end{eqnarray*}
	which implies that $T:K\to K$ is well defined. Furthermore,
	\begin{eqnarray*}
		t^{\alpha\eps}	\|Tu(t)-Tv(t)\|_{X^{1+\eps}_{q}} & \le &  \left(2Mc\mu^{\rho-1}t^{\alpha\eps} \int_{0}^{t}(t-s)^{-1-\alpha(\eps-\rho\eps)}s^{-\alpha\rho\eps}ds\right) \mathrm{d}(u,v)\\
		&\le& \left(2Mc\mu^{\rho-1}{\bf B}\right)\mathrm{d}(u,v)\\
		&\le& \frac{1}{2}\ \mathrm{d}(u,v).
	\end{eqnarray*}
	This shows that $T$ is a $\frac{1}{2}$-contraction and the existence of a global mild solution follows from the Banach fixed point theorem. The continuous dependence is proved likewise the proof of Theorem \ref{local}.	
\end{proof}

\begin{proof}[Proof of Theorem \ref{asymptotics}]
Firstly, note that 
\begin{eqnarray}\label{est3}
t^{\alpha\eps}\left\| u(t)-v(t)  \right\|_{X^{1+\eps}_{q}} & \le & t^{\alpha\eps}\left\| E_\alpha(t\mathcal{A}_q)(u_0 - v_0) + S_\alpha(t\mathcal{A}_q)(u_1 - v_1)  \right\|_{X^{1+\eps}_{q}}\nonumber\\
& + & t^{\alpha\eps}\int_{0}^{t}\left\|R_{\alpha}((t-s)\mathcal{A}_q)\big(f(u(s))-f(v(s))\big)\right\|_{X^{1+\epsilon}_{q}}ds.
\end{eqnarray}
If \eqref{est1} is true, then the first term of the right  side in the above estimate goes to $0$ as $t\to+\infty$. To the last term, we have
\begin{align}\label{est4}
t^{\alpha\eps}\int_{0}^{t}&\left\|R_{\alpha}((t   -  s)\mathcal{A}_q)\big(f(u(s))-f(v(s))\big)\right\|_{X^{1+\epsilon}_{q}}ds  \le  M\int_{0}^{t}(t-s)^{-1-\alpha(\eps-\rho\eps)}\|f(u(s))-f(v(s))\|_{X^{\rho\eps}_{q}}ds\nonumber\\
  & \le  Mc\int_{0}^{t}(t-s)^{-1-\alpha(\eps-\rho\eps)}\big(\left\|u(s)\right\|^{\rho-1}_{X^{1+\epsilon}_{q}}+\left\|v(s)\right\|^{\rho-1}_{X^{1+\epsilon}_{q}}\big)\|u(s)-v(s)\|_{X^{1+\eps}_{q}}ds\nonumber\\
&  \le  2Mc\mu^{\rho-1}\int_{0}^{t}(t-s)^{-1-\alpha(\eps-\rho\eps)}s^{-\alpha\rho\eps}s^{\alpha\eps}\|u(s)-v(s)\|_{X^{1+\eps}_{q}}ds\nonumber\\
&  \le  2Mc\mu^{\rho-1}\int_{0}^{1}(1-s)^{-1-\alpha(\eps-\rho\eps)}s^{-\alpha\rho\eps}(st)^{\alpha\eps}\|u(st)-v(st)\|_{X^{1+\eps}_{q}}ds.
\end{align}
Let $(t_n)_{n=1}^{\infty}$ a sequence of real numbers such that $t_{n}\to+\infty$ as $n\to+\infty$. For each $n\in\mathbb{N}$, the function $f_{n}:(0,1)\to\mathbb{R}$ given by
$$f_{n}(s)=(1-s)^{-1-\alpha(\eps-\rho\eps)}s^{-\alpha\rho\eps}(st_{n})^{\alpha\eps}\|u(st_{n})-v(st_{n})\|_{X^{1+\eps}_{q}},$$
is a mensurable function with respect to the Borel measure. Furthermore, for all $n\in\mathbb{N}$, $f_n$ is a nonnegative function, and 
$$f_n(s)\le g(s)= 2\mu(1-s)^{-1-\alpha(\eps-\rho\eps)}s^{-\alpha\rho\eps},\quad \forall s\in(0,1).$$
Since $g:(0,1)\to\mathbb{R}$ is an integrable function we can conclude that
\begin{align*}
\limsup_{n\to\infty} \int_{0}^{1} f_{n}(s)ds &=  \limsup_{n\to\infty}  \int_{0}^{1}(1-s)^{-1-\alpha(\eps-\rho\eps)}s^{-\alpha\rho\eps}(st_{n})^{\alpha\eps}\|u(st_{n})-v(st_{n})\|_{X^{1+\eps}_{q}}ds\\
& \le \int_{0}^{1}(1-s)^{-1-\alpha(\eps-\rho\eps)}s^{-\alpha\rho\eps}\limsup_{n\to\infty} \big((st_{n})^{\alpha\eps}\|u(st_{n})-v(st_{n})\|_{X^{1+\eps}_{q}}\big)ds.
\end{align*}
Setting ${\bf L}= \limsup_{t\to+\infty}t^{\alpha\eps}\left\| u(t)-v(t)  \right\|_{X^{1+\eps}_{q}}$, it follows from \eqref{est3},  \eqref{est4}, and the above estimate that
$$0\le {\bf L}\le \big(2Mc\mu^{\rho-1}{\bf B} \big){\bf L} \Leftrightarrow 0\le \big(1-2Mc\mu^{\rho-1}{\bf B} \big){\bf L}\le 0.  $$
Remembering that $1-2Mc\mu^{\rho-1}{\bf B} >0$, we have
$${\bf L}= \limsup_{t\to+\infty}t^{\alpha\eps}\left\| u(t)-v(t)  \right\|_{X^{1+\eps}_{q}}=0.$$

Reciprocally, suppose ${\bf L}=0$. Then
\begin{align*}
 t^{\alpha\eps}\left\| E_\alpha(t\mathcal{A}_q)(u_0 - v_0) + S_\alpha(t\mathcal{A}_q)(u_1 - v_1)  \right\|_{X^{1+\eps}_{q}} & \le  t^{\alpha\eps}\left\| u(t)-v(t)  \right\|_{X^{1+\eps}_{q}}\\
& +  t^{\alpha\eps}\int_{0}^{t}\left\|R_{\alpha}((t-s)\mathcal{A}_q)\big(f(u(s))-f(v(s))\big)\right\|_{X^{1+\epsilon}_{q}}ds
\end{align*}
$$\le  t^{\alpha\eps}\left\| u(t)-v(t)  \right\|_{X^{1+\eps}_{q}}  +  2Mc\mu^{\rho-1}\int_{0}^{1}(1-s)^{-1-\alpha(\eps-\rho\eps)}s^{-\alpha\rho\eps}(st)^{\alpha\eps}\|u(st)-v(st)\|_{X^{1+\eps}_{q}}ds,$$
which implies 
$$\limsup_{t\to+\infty} t^{\alpha\eps}\left\| E_\alpha(t\mathcal{A}_q)(u_0 - v_0) + S_\alpha(t\mathcal{A}_q)(u_1 - v_1)  \right\|_{X^{1+\eps}_{q}}\le {\bf L}+ \big(2Mc\mu^{\rho-1}{\bf B} \big){\bf L}=0.$$
\end{proof}


\noindent{\bf Declarations:} 
\begin{itemize}
	\item	 Ethics approval and consent to participate: Not applicable.
	\item	Consent for publication: Not applicable.
	\item	Availability of data and materials: Not applicable.
	\item	Competing interests: The authors have no relevant financial or non-financial interests to disclose.
	\item	Funding: Bruno de Andrade is partially supported by CNPQ/Brazil (grant 310384/2022-2) and FAPITEC/Sergipe/Brazil (grant 019203.01303/2024-1).
	\item	Authors' contributions: M. Costa contributed to the implementation of the research, the analysis of the results, and the writing of the manuscript. C. Cuevas contributed to the implementation of the research, the analysis of the results, and the writing of the manuscript. B. de Andrade contributed to the implementation of the research, the analysis of the results, and the writing of the manuscript.
	\item	Acknowledgements:  Not applicable.
\end{itemize}

\bibliographystyle{amsplain}

\end{document}